\documentclass[11pt]{amsart}
\usepackage{amsmath,amssymb,mathrsfs,color,amsthm}
\usepackage{bm}
\usepackage{array}
\usepackage{longtable}
\usepackage{tabularx}
\usepackage{booktabs}
\usepackage[title]{appendix}
\usepackage{amssymb}
\usepackage{graphicx}  
\usepackage{graphicx}  
\usepackage{caption}   
\usepackage{enumitem}
\usepackage{relsize}
\usepackage{graphicx}
\usepackage{xcolor}
\usepackage{hyperref}
\allowdisplaybreaks

\def\R{{\mathbb R}}
\def\P{{\mathbb P}}
\def\E{{\mathbb E}}

\def\L{\mathcal{L}}

\newtheorem{thm}{Theorem}[section]

\newtheorem{lem}[thm]{Lemma}
\newtheorem{prop}[thm]{Proposition}

\theoremstyle{definition}
\newtheorem{de}[thm]{Definition}
\theoremstyle{remark}
\newtheorem{rem}[thm]{Remark}

\newtheorem{ass}[thm]{\bfseries Assumption}
\numberwithin{equation}{section}

\newcommand{\I}{\mathcal{I}}
\newcommand{\J}{\mathcal{J}}
\newcommand{\rmd}{{\rm d}}
\newcommand{\rme}{{\rm e}}
\newcommand{\tr}{{\rm tr}}
\newcommand{\F}{{\mathcal{F}}}
\allowdisplaybreaks

\begin{document}

\title[Generalized gradient flow and GENERIC]{Nonlinear Vlasov-Fokker-Planck equations: From generalized Wasserstein gradient flow to GENERIC structure}

\author{Zhenxin Liu}
\address{Z. Liu: School of Mathematical Sciences, Dalian University of Technology, Dalian
116024, P. R. China}
\email{zxliu@dlut.edu.cn}

\author{Xuewei Wang}
\address{X. Wang (Corresponding author): School of Mathematical Sciences, Dalian University of Technology, Dalian
116024, P. R. China}
\email{Wangxueweii@163.com}

\date{April 10, 2026}

\subjclass[2020]{49Q22, 60H10, 35Q84, 60G44}

\keywords{Vlasov-Fokker-Planck equation, GENERIC, mean-field Langevin equation, optimal transport.}

\begin{abstract}
We study the GENERIC (General Equation for Non-Equilibrium Reversible Irreversible Coupling) formulation of the nonlinear Vlasov-Fokker-Planck equation from the perspective of gradient flows along trajectories.
After pulling back the reversible component, the evolution can be recast as a generalized Wasserstein gradient flow.
The associated free energy functional consists of an entropy term and a conservative energy term, while the metric is induced by the Onsager operator.
This trajectory based viewpoint shows that the trajectorial rate of free energy dissipation captures the influence of the nonlinear term, an effect that is not directly apparent at the macroscopic level.
Finally, partial degeneracy yields a decomposition of the underlying metric space, which in turn enables the derivation of a partial HWI inequality.
\end{abstract}

\maketitle

\section{Introduction}

In \cite{JKO2}, Jordan, Kinderlehrer and Otto first established a Wasserstein gradient flow formulation of the Fokker-Planck equation through a time discretization scheme.
This variational approach has since been extended to a wide class of evolution equations.
To extract more refined information at the microscopic level, Karatzas et al. analyzed energy dissipation on a trajectory basis by means of an associated stochastic differential equation in \cite{KST}.
This trajectory based methodology has subsequently been adapted to nonlinear \cite{LW, TC} and degenerate \cite{KC} contexts.

However, not every Fokker-Planck equation admits the classical Wasserstein gradient flow structure.
In the partially degenerate setting, our objective is to identify an appropriate metric and to establish a corresponding Wasserstein-type gradient flow formulation.
In particular, we consider the following Vlasov-Fokker-Planck equation
\begin{equation}\label{VFPE}
\begin{aligned}
\partial_t f+v\nabla_x f-\dfrac{1}{m}\nabla_x U \nabla_v f-\dfrac{1}{m}\nabla_x (K*f)\nabla_v f=\dfrac{\gamma}{m}\nabla_v & (vf+\dfrac{k_B T_B}{m}\nabla_v f),
\end{aligned}
\end{equation}
where $(t,x,v) \in [0, +\infty)\times \R^d \times \R^d$ and the initial condition is given by $f_0$.
The smooth functions $U ,K : \R^d \rightarrow [0, +\infty)$ denote the prescribed confinement and interaction potentials, respectively.
Here $m, \gamma, k_B, T_B$ are fixed positive constants representing, respectively, the particle mass, the friction coefficient, the Boltzmann constant, and the bath temperature.
The convolution is defined by
$K*f(x):=\int_{\R^{2d}}K(x-x')f(x', v')\rmd x' \rmd v'$.
For convenience, we introduce the notation $\beta=\frac{1}{k_B T_B}$ and $\sigma=\sqrt{\frac{2\gamma}{\beta m^2}}$.

Unfortunately, the degenerate system need not admit a gradient flow formulation.
As shown in \cite{DPZ}, \eqref{VFPE} possesses a GENERIC structure in the sense that its dynamics decompose into reversible and dissipative components.
Formally, we have two functionals $\mathrm{E}$ and $\mathrm{S}$ such that
\begin{equation}\label{gen}
\partial_t f = \mathrm{L}(f) \rmd \mathrm{E}(f) + \mathrm{J}(f) \rmd \mathrm{S}(f),
\end{equation}
where $\mathrm{L}$ and $\mathrm{J}$ are Poisson and Onsager operators, respectively.
In \cite{KLMP}, Kraaij et al. derived a GENERIC formulation from a new fluctuation symmetry. Their approach extends the gradient flow paradigm by incorporating a Hamiltonian component while preserving the free energy as a Lyapunov function.
In \cite{MPZ}, Mielke et al. began with a Hamiltonian system and, by combining energy, entropy, and the Onsager operator, derived a GENERIC framework for the microscopic dynamics.
By contrast, we place greater emphasis on energy dissipation and develop a generalized Wasserstein gradient flow description at the trajectory level.
To this end, we analyze the mean-field Langevin equation
\begin{equation}\label{MFLE}
\begin{aligned}
\rmd X_t&=V_t\rmd t, \\
\rmd V_t&=-\dfrac{1}{m}\nabla U(X_t)\rmd t -\dfrac{1}{m}\nabla (K*\rho_t)(X_t) \rmd t -\dfrac{\gamma}{m} V_t \rmd t +\sigma \rmd B_t
\end{aligned}
\end{equation}
on $\R^d \times \R^d$. Here $\rho_t:={ \rm Law}(X_t, V_t)$ and $(B_t)_{t\geq 0}$ denotes a standard Brownian motion on a probability space $(\Omega, \mathcal{G}, \P)$ with normal filtration $(\mathcal{G}_t)_{t\geq 0}$.
The drift depends on the law of the process, and such equations are commonly referred to as McKean-Vlasov stochastic differential equations \cite{K}.

The free energy functional comprises an entropic contribution and a conservative energy term. It can be expressed as
\begin{equation}\label{fenergy}
\mathcal{F}(f):= \int_{\R^{2d}} f\ln f +\beta m f \check{h}_f \rmd x \rmd v,
\end{equation}
where $\check{h}_f(x,v):=\frac{1}{2}v^2 +\frac{1}{m}U(x)+\frac{1}{2m} K*f(x)$.
We pull back the dynamics along the flow $\Phi$ generated by the reversible component
and denote the resulting distribution and density by $\mu$ and $u$, respectively.
To avoid ambiguity, we write $(\tilde{x}, \tilde{v})^{\top}$ for the coordinates after the pullback of $(x, v)^{\top}$.
Other variables are treated analogously.
At this point, the energy functional
\begin{equation}\label{penergy}
\tilde{\F}_t(u):=\int_{\R^{2d}} u(\tilde{x}, \tilde{v}) \ln u(\tilde{x}, \tilde{v})+ \beta m u(\tilde{x},\tilde{v}) \check{h}_{f_t}(\Phi_t(\tilde{x}, \tilde{v}))  \rmd \tilde{x} \rmd \tilde{v}
\end{equation}
satisfies $\F(f_t) =\tilde{\F}_t(u_t)$.
Applying the semimartingale decomposition to the transformed system
yields a trajectorial identity for the rate of free energy dissipation of the form
\begin{equation*}
\begin{aligned}
\lim_{t \downarrow t_0} \dfrac{\mathbb{E}[\tilde{\theta}_{t}(\tilde{X}_{t}, \tilde{V}_{t}, \rho_{t})| \mathcal{G}_{t_0}] -\tilde{\theta}_{t_0}(\tilde{X}_{t_0}, \tilde{V}_{t_0}, \rho_{t_0}) }{t-t_0}
= \tilde{\mathcal{D}}_{t_0}(\tilde{X}_{t_0}, \tilde{V}_{t_0}), ~ \P-a.s.,
\end{aligned}
\end{equation*}
where the explicit expression of $\tilde{\mathcal{D}}_t$ is given in \eqref{D} and
\begin{equation}\label{energyp}
\tilde{\theta}_t(\tilde{X_t}, \tilde{V_t}, \rho_t):=\ln u_t(\tilde{X_t}, \tilde{V_t}) + \beta m \check{h}_{\rho_t}(\Phi_t(\tilde{X_t},\tilde{V_t}))
\end{equation}
denotes the energy process.
Moreover, we obtain the corresponding free energy dissipation
\begin{equation}\label{diss}
\begin{aligned}
\dfrac{\rmd}{\rmd t}\tilde{\F}_t(u_t)=& -\dfrac{1}{2} \int_{\R^{2d}} | \nabla (\ln u_t(\tilde{x}, \tilde{v})+ \beta m \check{h}_{f_t}(\Phi_t(\tilde{x}, \tilde{v})) ) D\Phi_{-t}(\Phi_t(\tilde{x}, \tilde{v}))G |^2 u_t(\tilde{x}, \tilde{v}) \rmd \tilde{x} \rmd \tilde{v} \\
=&:-\tilde{\I}(u_t) > -\infty,
\end{aligned}
\end{equation}
where $G$ is the diffusion coefficient of \eqref{MFLE} and $\tilde{\I}$ denotes the energy dissipation functional.
Because the diffusion is partially degenerate, we work with the Otto-Onsager metric $W_J$ associated with a symmetric, positive semidefinite matrix $J$, defined by
\begin{equation*}
\begin{aligned}
W_{J}^2(\mu_0, \mu_1):=\inf_{(\mu_t, w_t)} \Big\{ & \int_0^1 \int_{\R^{2d}} \langle w_t, J w_t \rangle \rmd \mu_t \rmd t \bigg|  \partial_t \mu_t+{\rm div} \big(J w_t \mu_t \big) =0 \\
& \text{ holds in the distributional sence} \Big\}
\end{aligned}
\end{equation*}
on the probability measure space $\mathcal{P}(\R^{2d})$.
Using the associated tangent space construction in Definition \ref{TS}, we introduce the corresponding metric derivative and obtain
\begin{equation*}
|\dot{\mu}_{t_0}|=\lim_{t \downarrow t_0} \dfrac{W_J(\mu_t, \mu_{t_0})}{t-t_0} =(\tilde{\I}(u_{t_0}))^{\frac{1}{2}}.
\end{equation*}
In fact, this quantity is not well defined in the original state space, which further underscores the necessity of the initial pullback.
Finally, by introducing suitable perturbations and comparing the Otto-Onsager metric slopes of the resulting curves, we derive a generalized Wasserstein gradient flow formulation for the pulled back system, which can be written as
\begin{equation*}
\begin{aligned}
\partial_t u_t= & -{\rm grad}_{W_J} \tilde{\F}_t(u_t)
=  {\rm div}\Big( J u_t \nabla \big( \ln u_t+ \beta m h_{f_t}(\Phi_t(\tilde{x}, \tilde{v})) \big) \Big),
\end{aligned}
\end{equation*}
where $h_f(x,v):=\frac{1}{2}v^2 +\frac{1}{m}U(x)+\frac{1}{m} K*f(x)$.

Transferring the preceding results to the original system, we recover the GENERIC structure
\begin{equation*}
\begin{aligned}
\partial_t f_t = & \L(f_t) \dfrac{\delta H_{f}}{\delta f}(f_t) + \J (f_t) \dfrac{\delta \F}{\delta f}(f_t) \\
 =& -\{ f_t, h_{f_t} \}+ {\rm div}\Big (\frac{1}{2}GG^{\top} f_t \nabla(\ln f_t +\beta m h_{f_t}) \Big).
\end{aligned}
\end{equation*}
The functional $H_f$ is defined in \eqref{Herergy}.
The operators $\L$ and $\J$ are defined in \eqref{L} and \eqref{O}, respectively.
Accordingly, we obtain the trajectorial rate of free energy dissipation identity, given by
\begin{equation*}
\begin{aligned}
& \lim_{t \downarrow t_0} \dfrac{\mathbb{E}[\theta_{t}(X_{t}, V_t, \rho_{t})| \mathcal{G}_{t_0}] -\theta_{t_0}(X_{t_0}, V_{t_0}, \rho_{t_0}) }{t-t_0} \\
= &2\dfrac{\gamma}{m} +\dfrac{\sigma^2}{2 f_{t_0}(X_{t_0}, V_{t_0})} \Delta_v f_{t_0}(X_{t_0}, V_{t_0}) -\beta\gamma |V_{t_0}|^2 +\dfrac{1}{2}\sigma^2 \Delta_v \ln f_{t_0}(X_{t_0}, V_{t_0}) \\
 & -\dfrac{1}{2}\beta \big \langle \nabla K*f_{t_0}(X_{t_0}), V_{t_0} \big \rangle
 + \E_{\bar{\P}} \Big[ \Big\langle \dfrac{1}{2}\beta \nabla K(X_{t_0}-Y_{t_0}^1), V_{t_0} \Big \rangle  \Big], ~ \P-a.s.,
\end{aligned}
\end{equation*}
as well as the free energy dissipation
\begin{equation}\label{diss2}
\dfrac{\rmd}{\rmd t}\F(f_t)= -\dfrac{1}{2} \int_{\R^{2d}} | \nabla (\ln f_t+ \beta m \check{h}_{f_t} ) G |^2 f_t \rmd x \rmd v =:-\I(f_t)> -\infty,
\end{equation}
where $\theta_t(x,v, f_t):=\ln f_t(x,v) + \beta m f_t \check{h}_{f_t}(x,v)$ and $(Y^1, Y^2)^{\top}$ denotes an independent copy of $(X, V)^{\top}$ defined on $(\bar{\Omega}, \bar{\mathcal{G}}, \bar{\P})$.
We observe that the trajectory based approach provides a more transparent description of how nonlinear interactions shape the dynamics.
The contribution of the linear term $U$ is not explicitly detectable either at the level of individual trajectories or in the total rate of energy dissipation.
By contrast, the influence of the nonlinear interaction term $K$ is manifested directly through the trajectorial dissipation of energy.
This discrepancy stems from the fact that the nonlinear term renders the conservative energy macroscopically conserved, whereas such conservation typically fails along individual trajectories.

Furthermore, partial degeneracy can induce a distinctive geometric structure and lead to a new result.
Upon imposing $\nabla K=0$, we observe that partial degeneracy induces a decomposition of the metric space $(\mathcal{P}(\R^{2d}), W_J)$.
On each piece, a modified Wasserstein distance is well defined, while the distance between distinct pieces is infinite.
Leveraging this geometric decomposition, we further derive a partial HWI inequality of the form
\begin{equation*}
H(\mu_0 | \mu_{\infty})- H(\mu_1 | \mu_{\infty}) \leq \sqrt{I(\mu_0 | \mu_{\infty})}~ W_J(\mu_0, \mu_1)- \dfrac{\kappa_M}{2}W_J^2(\mu_0, \mu_1)
\end{equation*}
under the Bakry-\'Emery criterion.
Here $H$ denotes the relative entropy and $I$ denotes the partially relative Fisher information. See \eqref{rentrop} and \eqref{fisher} for their definitions.
The explicit Bakry-\'Emery condition is stated in Assumption \ref{hess}.

The paper is organized as follows. Section \ref{pre} contains the preliminaries, including notation and assumptions.
Section \ref{main} presents our main results.
In Subsection \ref{re}, we analyze the reversible component. We construct the associated Hamiltonian flow and derive the evolution equation obtained after the pullback.
In Subsection \ref{GF}, we establish the free energy dissipation identity and define the associated metric derivative, which together yield the generalized Wasserstein gradient flow formulation.
Building on this gradient flow representation of the pulled back system, Subsection \ref{generic} recovers the GENERIC structure of the original system \eqref{VFPE}.
Section \ref{degen} establishes a partial HWI inequality based on the metric space structure induced by partial degeneracy.


\section{Preliminaries}\label{pre}
In this section we introduce the notation, functional framework, and standing assumptions used throughout the paper.

\subsection{Notation}
Here we collect the key symbols used throughout the paper.
Most of them are standard, but we include them for the reader's convenience.
Whenever a symbol requires a more precise definition, this will be given at its first occurrence in the text.

\begin{longtable}{@{}p{0.22\textwidth} p{0.55\textwidth} p{0.18\textwidth}@{}}
\toprule
\textbf{Symbol} & \textbf{Meaning} & \textbf{Reference} \\
\midrule
\endfirsthead

\toprule
\textbf{Symbol} & \textbf{Meaning} & \textbf{Reference} \\
\midrule
\endhead

$b_H, b_D, G, A$; $\tilde{b}, \tilde{G}, \tilde{A}$ & drift (reversible and dissipative) coefficients, diffusion coefficients and diffusion matrices & \eqref{bHbDG}, \eqref{coeff} \\
$\rho, f$ & measure and its density for the original system & \\
$\mu, u$ & measure and its density for the pulled back system & \\
$\rho_{\infty}, f_{\infty}, \mu_{\infty}, u_{\infty}$ & invariant measures and their densities & \eqref{infty} \\
$\Phi$ & flow generated by $b_H$ & \eqref{flow} \\
$\mathcal{P}$ & space of probability measures & \\
$\mathcal{P}_2$ & space of probability measures with finite second moment & \\
$H_f$ & energy functional corresponding to $b_H$ & \eqref{Herergy} \\
$\F$, $\tilde{\F}$ & free energy functionals & \eqref{fenergy}, \eqref{penergy} \\
$\I$, $\tilde{\I}$ & energy dissipation functionals & \eqref{diss2}, \eqref{diss} \\
$\theta$, $\tilde{\theta}$ & energy processes &  Thm. \ref{ori}, \eqref{energyp} \\
$\mathcal{D}, \tilde{\mathcal{D}}$ & energy dissipation terms of trajectories & \eqref{cortra}, \eqref{D} \\
$\L$ & Poisson operator &  \eqref{L} \\
$\J$ & Onsager operator & \eqref{O} \\
$\langle \cdot, \cdot \rangle_{J, \mu}$, $\| \cdot \|_{J, \mu}$ & Otto-Onsager scalar product and norm & \eqref{product}, \eqref{norm} \\
$W_{J} $ & Otto-Onsager metric & Def. \ref{metric} \\
${\rm Tan}_{J, \mu} \mathcal{P}(\R^{2d}), |\cdot |_{J, \mu}$ & tangent space of $(\mathcal{P}(\R^{2d}), W_J)$ and the metric on it & Def. \ref{TS} \\
${\rm grad}_{W_J} \tilde{\F}$ & $W_J$-gradient flow of $\tilde{\F}$ & \eqref{grad} \\
$H(\mu | \mu_{\infty})$, $I(\mu | \mu_{\infty})$ & relative entropy, partially relative Fisher information of $\mu$ with respect to $\mu_{\infty}$ & \eqref{rentrop}, \eqref{fisher} \\
\bottomrule
\end{longtable}
To treat the reversible and irreversible components separately, we introduce the reversible operator $L_{rev}$ and irreversible operator $L_{irr}$, defined by
\begin{equation*}
\begin{aligned}
L_{rev}(f):=& \dfrac{1}{m}(\nabla_x U+\nabla_x K*f) \nabla_v f -v\nabla_x f, \\
L_{irr}(f):=& \dfrac{\gamma}{m}\nabla_v  (vf) +\dfrac{1}{2}\sigma^2 \Delta_v f.
\end{aligned}
\end{equation*}
To distinguish symbols that play analogous roles in different systems, we consistently denote quantities in the pulled back system by an overtilde $\tilde{\cdot}$.
For clarity, we record two pieces of notation that are easily conflated. Specifically,
\begin{equation*}
h_f(z)=\dfrac{1}{2}v^2+\dfrac{1}{m}U(x) +\dfrac{1}{m}K*f(x), ~\check{h}_f(z)=\dfrac{1}{2}v^2+\dfrac{1}{m}U(x) +\dfrac{1}{2m}K*f(x).
\end{equation*}
For notational convenience, we write $\E$ for expectation with respect to the probability measure $\P$. Expectations taken under other probability measures are denoted explicitly.
We also adopt the Einstein summation convention.

Next, we briefly recall the definitions of the Poisson bracket and the Poisson-Vlasov bracket, which can be found in Section 1 of \cite{MR}.
For functions $f_1,f_2$ on $\R^d\times \R^d$, the Poisson bracket is defined by
\begin{equation}\label{poi}
\{f_1, f_2\}:= \nabla_x f_1 \nabla_v f_2-\nabla_v f_1 \nabla_x f_2.
\end{equation}
For the Vlasov case, the Poisson bracket between two functionals
$F_1$ and $F_2$ of $f$ is defined by
\begin{equation}\label{poiv}
\{F_1, F_2\}(f):= \int_{\R^{d}\times \R^d} f \Big\{ \dfrac{\delta F_1}{\delta f}, \dfrac{\delta F_2}{\delta f} \Big\}_{x,v}\rmd x \rmd v
\end{equation}
where $f \in C^{\infty}(\R^{2d})$ is the density function and $\{ \cdot, \cdot \}_{x, v}$ denotes the canonical Poisson bracket with respect to $(x,v)$.

\subsection{Setting}

Fix a terminal time $T\in (0, \infty)$. Introduce the notation $z:=(x,v)^{\top}$ and $Z_t:=(X_t,  V_t)^{\top}$.
With this convention, \eqref{VFPE} and \eqref{MFLE} can be written in the form
\begin{equation*}\label{VFP2}
\partial_t f_t(z)=-\nabla \big((b_H(z)+b_D(z))f_t(z)\big) +\dfrac{1}{2}\partial_{ij} (A^{ij}f_t(z))
\end{equation*}
and
\begin{equation*}\label{MFLE2}
\rmd Z_t=(b_H+b_D)(Z_t)\rmd t +G \rmd B_t
\end{equation*}
respectively, where
\begin{equation}\label{bHbDG}
b_H(z):=\binom{v}{-\frac{1}{m}\nabla U(x) -\frac{1}{m} \nabla K*f(x)},~ b_D(z):=\binom{0}{-\frac{\gamma}{m}v},~ G:=\binom{0}{\sigma {\rm I}_{d\times d}},~ A:=GG^{\top}.
\end{equation}

To guarantee sufficient regularity, we impose the following assumptions:

\begin{ass}\label{reg} \quad \\
(i) The smooth potential functions $U, K: \R^d \rightarrow [0, \infty)$ are assumed to satisfy that $K$ is symmetric and that there exists a constant $\kappa_U>0$ such that
\begin{equation*}\label{convex}
U(x)\geq \kappa_U|x|^2 ~ \text{ for all } x\in \R^d.
\end{equation*}
Moreover, $\nabla U$ and $\nabla K$ are Lipschitz continuous, and there exist constants $C_j$ such that
\begin{equation*}\label{der}
|\nabla^j U|\leq C_j, ~ |\nabla^j K|\leq C_j, ~ \forall j \geq 2.
\end{equation*}
(ii) The initial condition satisfies $f_0\in C^{\infty}(\R^{2d})$,
$$\E [|\nabla_v \ln (f_0(X_0, V_0)) |^2]<+\infty, ~
\E[|X_0|^{4}+|V_0|^{4}]< +\infty,
$$ and the initial free energy is finite, i.e. $\mathcal{F}(f_0)<+\infty$.
\end{ass}

Under Assumption \ref{reg}, the mean-field Langevin equation \eqref{MFLE} admits a unique, strong solution $(X_t, V_t)_{0\leq t\leq T}$, see Theorem 1.7 in \cite{C}.
In addition, $\rho_t\in \mathcal{P}_2(\R^{2d})$ for all $t\in [0,T]$,
and the solution
satisfies
\begin{equation}\label{moment}
\E\big[\sup_{0\leq t\leq T}(|X_t|^{p} +|V_t|^{p})\big]<+\infty, ~p=2,4.
\end{equation}
The corresponding density $f_t$ is a smooth solution of the Vlasov-Fokker-Planck equation \eqref{VFPE} (see, for instance, \cite{KRTY}, Lemma 4.3).
Moreover, Theorem 2.9 and Lemma 4.4 in \cite{KRTY} imply that,
for all $t\in (0,T]$, the density $f_t$ is strictly positive and satisfies
\begin{equation}\label{intln}
\E\big[ \int_t^T|\nabla_v \ln (f_s(X_s, V_s)) |^2 \rmd s \big] < +\infty.
\end{equation}

Furthermore, \eqref{MFLE} admits a unique invariant measure $\rho_{\infty}$. Its density $f_{\infty}$ admits an implicit representation of
\begin{equation}\label{infty}
f_{\infty}=\dfrac{\rme^{-\beta m h_{f_{\infty}}}}{\int_{\R^{2d}} \rme^{-\beta m h_{f_{\infty}}} \rmd z},
\end{equation}
and it is also the unique stationary solution of \eqref{VFPE}. Without loss of generality, we assume that
$\int_{\R^{2d}} \rme^{-\beta m h_{f_{\infty}}} \rmd z=1$.
In what follows, we adopt the convention $\mathcal{F}(\rho):=\mathcal{F}(f)$, where $\rmd \rho=f \rmd z$.
We use the same convention for $\I$, $\tilde{\I}$ and $\tilde{\F}$.

\section{Main results}\label{main}
In this section, we present the main results. We begin by analyzing the reversible component and constructing the associated flow used to pull back the dynamics.
We then establish a generalized Wasserstein gradient flow formulation for the pulled back system.
Finally, we transfer these results to the original system \eqref{VFPE} and recover its GENERIC structure.

\subsection{Hamiltonian flow} \label{re}
Before analyzing the energy dissipation and constructing a gradient flow formulation, we first isolate the reversible component of \eqref{VFPE}.
Specifically, we pull back the reversible dynamics along each trajectory, which yields a clean separation between the reversible and dissipative contributions to the evolution.

Let $\Phi$ denote the flow on $\R^{2d}$ generated by $b_H$, defined by
\begin{equation}\label{flow}
\begin{aligned}
\dfrac{\rmd}{\rmd t}\Phi_t(z)&=b_H(\Phi_t(z)), \\
\Phi_0(z)&=z.
\end{aligned}
\end{equation}
The associated energy functional is given by
\begin{equation}\label{Herergy}
\begin{aligned}
H_f(g):=&\int_{\R^{2d}} g \check{h}_f \rmd x\rmd v \\
=&\int_{\R^{2d}} g(x,v) \Big( \dfrac{1}{2}v^2 +\dfrac{1}{m}U(x)+\dfrac{1}{2m} K*f(x)\Big) \rmd x\rmd v.
\end{aligned}
\end{equation}

\begin{prop}
The flow $\Phi$ is the Hamiltonian flow generated by $H_f$.
\end{prop}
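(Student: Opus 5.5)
The proposition asserts that the vector field $b_H$ in \eqref{flow} is the Hamiltonian vector field associated with $H_f$ in \eqref{Herergy}. I would read this in the Vlasov--Poisson sense: the functional derivative of $H_f$ is used as the single-particle Hamiltonian, and the canonical symplectic structure on $(x,v)$ turns it into a vector field. The first issue is to pin down that derivative. Taking the variation of $H_f(g)$ in $g$ with $f$ frozen gives exactly $\check h_f$. But $b_H$ contains the full force $\frac1m\nabla K*f$, not half of it. So the relevant derivative must be the one of the self-consistent energy $g\mapsto H_g(g)$, evaluated at $g=f$.

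The first step is therefore to compute
\[
\frac{\delta}{\delta g}H_g(g)\Big|_{g=f}=\tfrac12 v^2+\tfrac1m U+\tfrac{1}{2m}K*f+\tfrac{1}{2m}\tilde K*f,
\]
where $\tilde K(y)=K(-y)$. The last term comes from differentiating the quadratic term $\frac1{2m}\iint g(z)K(x-x')g(z')\,\rmd z\,\rmd z'$. Since $K$ is symmetric (Assumption \ref{reg}(i)), $\tilde K=K$, so the derivative equals $h_f$. The only point needing care is this factor $\frac12$, and I would state explicitly that the symmetry of $K$ is what removes it.

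The second step is to write the canonical Hamilton equations for $h_f$ as a function on phase space:
\[
\dot x=\nabla_v h_f=v,\qquad \dot v=-\nabla_x h_f=-\tfrac1m\nabla U(x)-\tfrac1m\nabla K*f(x).
\]
This is precisely $b_H=\mathbb{J}\nabla h_f$ with $\mathbb{J}=\begin{pmatrix}0&I\\-I&0\end{pmatrix}$. Equivalently, in the notation of \eqref{poi}, one has $b_H\cdot\nabla\varphi=\{\varphi,h_f\}$ for any test function $\varphi$. Comparing with \eqref{bHbDG} then identifies the generator of $\Phi$ with the Hamiltonian vector field.

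The third step is well-posedness and the Hamiltonian properties of $\Phi$. By Assumption \ref{reg}, $\nabla U$ and $\nabla K$ are Lipschitz, hence so is $\nabla K*f$, which is a Lipschitz function convolved with a probability density. Therefore $b_H$ is globally Lipschitz and $\Phi_t$ is a global smooth flow of diffeomorphisms. Moreover $\mathrm{div}\,b_H=\nabla_x\cdot v-\nabla_v\cdot(\ldots)=0$, so $\Phi_t$ preserves Lebesgue measure (Liouville). The identity $\frac{\rmd}{\rmd t}h_f(\Phi_t)=\nabla h_f\cdot\mathbb{J}\nabla h_f=0$ shows that energy is conserved along the flow when $f$ is frozen. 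On the level of functionals, the pushforward $g_t=g\circ\Phi_{-t}$ satisfies $\partial_t g=-\{g,h_f\}$. Hence $\frac{\rmd}{\rmd t}F(g_t)=\{F,H\}(g_t)$, with the Poisson--Vlasov bracket \eqref{poiv}, which is the functional-level statement that $\Phi$ is the Hamiltonian flow of $H_f$.

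I expect the main obstacle to be interpretive rather than computational. One must decide whether $f$ in $b_H$ is frozen or time-dependent, and reconcile the $\frac12$ in $\check h_f$ with the full force in $b_H$. I would treat $f$ as a fixed parameter, or as frozen at a given time, so that $\Phi$ is autonomous, and note the symmetry argument above.
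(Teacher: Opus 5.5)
Your core computations are right, and your route differs from the paper's mainly in where the Hamiltonian structure is exhibited. The paper's proof stays at the density level. It asserts $\frac{\delta H_f}{\delta f}(f)=h_f$ and observes $L_{rev}(f)=-\{f,h_f\}$. Then, for linear test functionals $\Lambda_\phi(f)=\int\phi f\,\rmd z$, it integrates by parts in \eqref{poiv} to get
$\{\Lambda_\phi,H_f\}(f)=-\int\phi\{f,h_f\}\,\rmd z=\frac{\rmd}{\rmd t}\Lambda_\phi(f)$ along $\partial_t f=L_{rev}(f)$.
The identity $b_H=Q\nabla h_f$ and the Liouville property appear only in Remark~\ref{hrem}.

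You instead start from Hamilton's equations for $\Phi$, add well-posedness and volume preservation, and pass to densities by transport. This ties the statement directly to the object $\Phi$ it names. Your step 1 also makes explicit what the paper's ``direct calculation'' hides. Differentiating \eqref{Herergy} with $f$ frozen gives $\check h_f$. One obtains $h_f$ only by differentiating the self-consistent energy $g\mapsto H_g(g)$ and using the symmetry of $K$. The paper's route, in turn, needs no ODE theory and is phrased directly in the Poisson--Vlasov bracket.

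There is, however, a genuine inconsistency in your step 3, caused by freezing $f$. With $f$ fixed and $g_t=g\circ\Phi_{-t}$, integration by parts gives
$\frac{\rmd}{\rmd t}F(g_t)=\int g_t\{\frac{\delta F}{\delta g}(g_t),h_f\}\,\rmd z$.
This equals $\{F,H\}(g_t)$ only if $\frac{\delta H}{\delta g}(g_t)=h_f$, and neither candidate for $H$ achieves this for all $t$. The frozen functional $H_f$ of \eqref{Herergy} has derivative $\check h_f$, off by $\frac{1}{2m}K*f$, and the resulting bracket discrepancy is nonzero whenever $\nabla K*f\neq0$. The self-consistent energy from your step 1 has derivative $h_{g_t}$ at $g_t$, which matches $h_f$ only at an instant where $g_t=f$. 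Only the linear functional $g\mapsto\int g\,h_f\,\rmd z$ works for all $t$, and that is not $H_f$. So your ``Hence $\frac{\rmd}{\rmd t}F(g_t)=\{F,H\}(g_t)$'' is, as written, only an instantaneous identity.

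The repair is not to freeze. Let the density evolve self-consistently, $\partial_t f_t=-\{f_t,h_{f_t}\}$, whose characteristics are the non-autonomous flow $\dot z=Q\nabla h_{f_t}(z)$. Then your step 1 gives derivative $h_{f_t}$ of the self-consistent energy at every time, and the bracket identity holds for all $t$. This is exactly the paper's computation with $F=\Lambda_\phi$.

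It is also consistent with how $\Phi$ is used later, as a non-autonomous flow driven by the time-dependent $f_t$ in the pullback. Accordingly, drop your claim that $h_f$ is conserved along $\Phi$; Remark~\ref{hrem}(i) points out that this is lost once $f$ depends on time and $\nabla K\neq0$. Everything else in your steps 2--3 carries over unchanged to the time-dependent field: the identification $b_H=Q\nabla h_{f_t}$, the Lipschitz bounds (the Lipschitz constant of $\nabla K*f_t$ does not depend on $t$), and ${\rm div}\,b_H=0$.
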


\begin{proof}
A direct calculation shows that
\begin{equation*}\label{henergy}
\dfrac{\delta H_f}{\delta f}(f)=\dfrac{1}{2}v^2+\dfrac{1}{m}U(x) +\dfrac{1}{m}K*f(x)=h_f
\end{equation*}
and
\begin{equation*}
L_{rev}(f)=\dfrac{1}{m}(\nabla_x U+\nabla_x K*f) \nabla_v f -v\nabla_x f=-\{f, h_f \}
\end{equation*}
by \eqref{poi}.

Let $\phi\in C_c^{\infty}(\R^{2d})$ and define
\begin{equation*}
\Lambda_{\phi}(f):=\int_{\R^{2d}} \phi f \rmd z.
\end{equation*}
By the definition of the Poisson-Vlasov bracket in \eqref{poiv}, we have
\begin{equation*}
\begin{aligned}
\{ \Lambda_{\phi}, H_f \}(f)=&\int_{\R^{2d}} f \Big\{ \dfrac{\delta \Lambda_{\phi}}{\delta f}(f), \dfrac{\delta H_f}{\delta f}(f) \Big\}  \rmd z \\
=& \int_{\R^{2d}} f \{ \phi, h_f \}\rmd z \\
=& \int_{\R^{2d}} f (\nabla_x \phi \cdot \nabla_v h_f-\nabla_v \phi \cdot \nabla_x h_f)\rmd z.
\end{aligned}
\end{equation*}
Integration by parts in $x$ and $v$ yields
\begin{equation*}
\begin{aligned}
\{ \Lambda_{\phi}, H_f \}(f)=\int_{\R^{2d}} -\phi \nabla_x f \cdot \nabla_v h_f+\phi \nabla_v f \cdot \nabla_x h_f \rmd z
=\int_{\R^{2d}} -\phi \{ f, h_f \}\rmd z.
\end{aligned}
\end{equation*}
Therefore, for every test function $\phi$, we obtain
\begin{equation*}
\dfrac{\rmd}{\rmd t}\Lambda_{\phi}(f)=\int_{\R^{2d}} \phi  L_{rev}(f) \rmd z
=\int_{\R^{2d}} -\phi \{ f, h_f \}\rmd z
=\{ \Lambda_{\phi}, H_f \}(f),
\end{equation*}
which completes the proof.
\end{proof}

\begin{rem}\label{hrem}
(i) For $\dot{z}=b_H(z)=Q\cdot \nabla h_f(z)$, where $Q=\left(\begin{smallmatrix} 0 & {\rm I}_{d\times d} \\ -{\rm I}_{d\times d} & 0 \end{smallmatrix}\right)$, we compute that
\begin{equation}\label{energyc}
\dfrac{\rmd}{\rmd t}h_f=\partial_t h_f +\nabla_x h_f\cdot \dot{x} +\nabla_v h_f\cdot \dot{v}=\partial_t h_f
\end{equation}
and
\begin{equation}\label{energyc2}
\dfrac{\rmd}{\rmd t}H_f=\int_{\R^{2d}} \dfrac{\delta H_f}{\delta f}(f) L_{rev}(f) \rmd z=- \int_{\R^{2d}} h_f \{f, h_f \}\rmd z=- \dfrac{1}{2}\int_{\R^{2d}} \{f, h_f^2 \}\rmd z =0.
\end{equation}
The identity $\partial_t h_f=0$ holds only in the special case $\nabla K=0$.
In general, the distribution-dependent term $\nabla K*f$ obstructs a proof of trajectory-wise conservation of the particle energy.
Instead, one can establish energy conservation only at the macroscopic level. \\
(ii) Since $\nabla \cdot b_H(z) =\nabla_x \cdot (\nabla_v h_f) +\nabla_v \cdot (-\nabla_x h_f)=0$, it follows that
\begin{equation*}
\dfrac{\rmd}{\rmd t}|D\Phi_t| =|D\Phi_t| \tr (D b_H(\Phi_t))= |D\Phi_t| \nabla\cdot b_H(\Phi_t)=0, ~ t \in [0, T].
\end{equation*}
Together with $|D\Phi_0|=1$, this implies that $|D\Phi_t|=1$ for all $ t \in [0, T]$.
\end{rem}

We give the Poisson operator $\L$ based on
\begin{equation*}
\{F_1, F_2\}(f)= \int_{\R^{d}\times \R^d}  \dfrac{\delta F_1}{\delta f} \L(f) \dfrac{\delta F_2}{\delta f} \rmd x \rmd v.
\end{equation*}
Here $F_1$ and $F_2$ are functionals of $f$.
Using
\begin{equation*}
\begin{aligned}
\{F_1, F_2\}(f)= &\int_{\R^{d}\times \R^d} f \Big\{ \dfrac{\delta F_1}{\delta f}, \dfrac{\delta F_2}{\delta f} \Big\} \rmd x \rmd v \\
=&\int_{\R^{d}\times \R^d} \dfrac{\delta F_1}{\delta f} \Big(-\Big\{ f , \dfrac{\delta F_2}{\delta f} \Big\}\Big) \rmd x \rmd v,
\end{aligned}
\end{equation*}
we can define the Poisson operator $\L$ at $f$ by
\begin{equation}\label{L}
\L(f)\phi:=-\{f, \phi\},
\end{equation}
where $\phi \in C^{\infty}(\R^{2d})$.
Then
\begin{equation*}
L_{rev}(f_t)=-\{ f_t, h_{f_t} \} =\L(f_t)\dfrac{\delta H_{f}}{\delta f}(f_t).
\end{equation*}

We now pull back the systems \eqref{MFLE} and \eqref{VFPE} along the flow $\Phi$.
Define $\tilde{Z}_t:=(\tilde{X}_t, \tilde{V}_t)^{\top}:=\Phi_{-t}(Z_t)$ and $\tilde{z}:=(\tilde{x}, \tilde{v})^{\top}:=\Phi_{-t}(z)$.
A direct computation yields
\begin{equation*}
\begin{aligned}
\rmd \tilde{Z}_t^{\alpha}= &\big [-\partial_i\Phi_{-t}^{\alpha}(Z_t)b_H^i(Z_t)\rmd t +\partial_i\Phi_{-t}^{\alpha}(Z_t)\rmd Z_t^i +\dfrac{1}{2}\partial_{ij}\Phi_{-t}^{\alpha}(Z_t)A^{ij}\rmd t\big ] \Big |_{Z_t=\Phi_t(\tilde{Z}_t)} \\
= & \big [(D\Phi_{-t} b_D)^{\alpha}(Z_t) \rmd t +(D\Phi_{-t}(Z_t) G\rmd B_t)^{\alpha} +\dfrac{1}{2}\tr(A D^2\Phi_{-t}^{\alpha}(Z_t)) \rmd t \big ] \Big |_{Z_t=\Phi_t(\tilde{Z}_t)},
\end{aligned}
\end{equation*}
where $\alpha=1, ... , 2d$.
We denote $\Gamma^{\alpha}:=\tr(A D^2\Phi_{-t}^{\alpha}(Z_t))$ and $\Gamma:=(\Gamma^1, ... , \Gamma^{2d})^{\top}$, and thus
\begin{equation}\label{MFLE3}
\rmd \tilde{Z}_t=\tilde{b}(\tilde{Z}_t)\rmd t+\tilde{G}(\tilde{Z}_t)\rmd B_t,
\end{equation}
where
\begin{equation}\label{coeff}
\tilde{b}(\tilde{Z}_t):=D\Phi_{-t}(\Phi_t(\tilde{Z}_t)) b_D(\Phi_t(\tilde{Z}_t)) +\dfrac{1}{2}\Gamma(\Phi_t(\tilde{Z}_t)),~ \tilde{G}(\tilde{Z}_t):=D\Phi_{-t}(\Phi_t(\tilde{Z}_t)) G.
\end{equation}
For convenience, we write $\tilde{A}:=\tilde{G}\tilde{G}^{\top}$, and we use $\mu_t$ to denote the law of \eqref{MFLE3}.

Similarly, we obtain
\begin{equation*}
\begin{aligned}
\partial_t f_t(\Phi_t(\tilde{z}))
= & \big[ \partial_t f_t(z) +\nabla f_t(z)\cdot b_H(z)\big] \Big|_{z=\Phi_t(\tilde{z})} \\
= & \big [ -\nabla \cdot (b_D(z))f_t(z)) +\dfrac{1}{2}\partial_{ij}(A^{ij}f_t(z)) \big] \Big|_{z=\Phi_t(\tilde{z})} \\
= & -\partial_i(\tilde{b}^i(\tilde{z}) f_t(\Phi_t(\tilde{z})) ) +\dfrac{1}{2}\partial_{ij}((\tilde{G}\tilde{G}^{\top})^{ij}(\tilde{z}) f_t(\Phi_t(\tilde{z})) ).
\end{aligned}
\end{equation*}
Set $u_t(\tilde{z}):= f_t(\Phi_t(\tilde{z}))$.
The system obtained from \eqref{VFPE} can then be written as
\begin{equation}\label{VFPE2}
\begin{aligned}
\partial_t u_t(\tilde{z})
=  -\partial_i(\tilde{b}^i(\tilde{z}) u_t(\tilde{z}) ) +\dfrac{1}{2}\partial_{ij}((\tilde{G}\tilde{G}^{\top})^{ij}(\tilde{z}) u_t(\tilde{z})).
\end{aligned}
\end{equation}

\subsection{Gradient flow}\label{GF}

We now turn to the irreversible component and present a gradient flow formulation of \eqref{VFPE2}.
We proceed in three steps. We first establish the free energy dissipation identity.
We then characterize the metric derivative.
Finally, we analyze the metric slope of the free energy.

{\bf Step 1: Free energy dissipation.}
Equation \eqref{energyc2} shows that the total free energy is conserved under the pullback, meaning that $H_f$ is invariant along the flow $\Phi$.
Equation \eqref{energyc} indicates, however, that this conservation generally fails at the level of individual trajectories.
Consequently, the definition of the energy process along a given trajectory must explicitly incorporate the pullback.
We denote the functional
\begin{equation*}\label{pbenergy}
\tilde{\F}_t(u)=\int_{\R^{2d}} u(\tilde{z}) \ln u(\tilde{z})+ \beta m u(\tilde{z}) \check{h}_{f_t}(\Phi_t(\tilde{z}))  \rmd \tilde{z}
\end{equation*}
and the energy process
\begin{equation*}
\tilde{\theta}_t(\tilde{Z_t}, \rho_t):=\ln u_t(\tilde{Z_t}) + \beta m \check{h}_{\rho_t}(\Phi_t(\tilde{Z_t})).
\end{equation*}
In view of \eqref{fenergy} and the flow $\Phi$, we obtain
\begin{equation*}
\F(f_t) =\tilde{\F}_t(u_t)
=\E[\tilde{\theta}_t(\tilde{Z}_t, \rho_t)].
\end{equation*}
Here, our focus is on the rate of dissipation of $\tilde{\F}_t(u_t)$, rather than of $\F(u_t)$, since our primary interest lies in the rate of energy dissipation in \eqref{VFPE}.

\begin{thm}\label{fder}
Suppose that Assumption \ref{reg} holds. Then, for $t_0\in[0,T)$, the trajectorial rate of free energy dissipation identity is given by
\begin{equation}\label{thm}
\begin{aligned}
\lim_{t \downarrow t_0} \dfrac{\mathbb{E}[\tilde{\theta}_{t}(\tilde{Z}_{t}, \rho_{t})| \mathcal{G}_{t_0}] -\tilde{\theta}_{t_0}(\tilde{Z}_{t_0}, \rho_{t_0}) }{t-t_0}
= \tilde{\mathcal{D}}_{t_0}(\tilde{Z}_{t_0}), ~ \P-a.s.,
\end{aligned}
\end{equation}
where
\begin{equation}\label{D}
\begin{aligned}
\tilde{\mathcal{D}}_t(\tilde{z}):=& \dfrac{1}{2} \nabla \tilde{\theta}_t(\tilde{z}, f_t) \Big( {\rm div} \tilde{A}(\tilde{z}) +\tilde{A}(\tilde{z}) \nabla \ln u_t(\tilde{z}) +\beta m\tilde{A}(\tilde{z}) \nabla \check{h}_{f_t}(\Phi_t(\tilde{z}) ) +\Gamma(\Phi_t(\tilde{z}) ) \Big) \\
&+\tr (\tilde{A}(\tilde{z}) \Delta \tilde{\theta}_t(\tilde{z}, f_t))
+ \beta m \nabla \check{h}_{f_t}(\Phi_t(\tilde{z})) b_H( \Phi_t(\tilde{z})) \\
& + \dfrac{\beta}{2}
\int_{\R^{2d}} \binom{\nabla K(\Phi_t^1(\tilde{z})-y^1)}{0} b_H(y) f_t(y) \rmd y
\end{aligned}
\end{equation}
satisfies
\begin{equation}\label{thm1}
\begin{aligned}
&\E[\int_0^T \! \tilde{\mathcal{D}}_t(\tilde{Z}_{t})  \rmd t] \\
=& -\dfrac{1}{2} \int_0^T \E \big[ \big|\nabla \big(\ln u_t(\tilde{Z}_t) +\beta m \check{h}_{\rho_t}(\Phi_t(\tilde{Z}_t)) \big) D\Phi_{-t}(\Phi_t(\tilde{Z_t}))G \big|^2 \big ] \rmd t>-\infty.
\end{aligned}
\end{equation}
Here $\Phi_t(\tilde{z})=(\Phi_t^1(\tilde{z}), \Phi_t^2(\tilde{z}))^{\top}$, $y=(y^1, y^2)^{\top} \in \R^{2d}$ and
$Y:=(Y^1, Y^2)^{\top}$ denotes an independent copy of $Z$ defined on $(\bar{\Omega}, \bar{\mathcal{G}}, \bar{\P})$.
Moreover, the free energy dissipation is
\begin{equation}\label{average}
\begin{aligned}
\dfrac{\rmd}{\rmd t}\tilde{\F}_t(u_t)= & -\dfrac{1}{2} \int_{\R^{2d}} \big| \nabla \big(\ln u_t(\tilde{z}) +\beta m \check{h}_{f_t}(\Phi_t(\tilde{z})) \big) D\Phi_{-t}(\Phi_t(\tilde{z}))G \big|^2 u_t(\tilde{z}) \rmd \tilde{z} \\
=& -\tilde{\I}(u_t).
\end{aligned}
\end{equation}
\end{thm}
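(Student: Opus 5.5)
The plan is to apply It\^o's formula to the energy process $\tilde{\theta}_t(\tilde{Z}_t,\rho_t)=\ln u_t(\tilde{Z}_t)+\beta m\,\check{h}_{\rho_t}(\Phi_t(\tilde{Z}_t))$, using the pulled back dynamics \eqref{MFLE3} for $\tilde{Z}_t$ and the pulled back Fokker--Planck equation \eqref{VFPE2} for $\partial_t u_t$. The result should be a semimartingale decomposition
\begin{equation*}
\tilde{\theta}_t-\tilde{\theta}_{t_0}=\int_{t_0}^t \tilde{\mathcal{D}}_s(\tilde{Z}_s)\,\rmd s+\int_{t_0}^t \nabla\tilde{\theta}_s(\tilde{Z}_s,\rho_s)\,\tilde{G}(\tilde{Z}_s)\,\rmd B_s .
\end{equation*}
Once this holds, taking conditional expectations given $\mathcal{G}_{t_0}$ removes the stochastic integral, provided it is a true martingale. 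The limit \eqref{thm} then follows from continuity of $s\mapsto\tilde{\mathcal{D}}_s(\tilde{Z}_s)$ together with dominated convergence for conditional expectations.

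\textbf{Logarithmic part.} For $\ln u_t$, I will rewrite $\partial_t u_t/u_t$ using \eqref{VFPE2}. I expand $\partial_{ij}(\tilde{A}^{ij}u)$ and $\partial_i(\tilde{b}^iu)$, and substitute $\tilde{b}=D\Phi_{-t}b_D+\frac12\Gamma$. The key observation is that $b_D$ is exactly the part of $-\frac12 A\nabla(\beta m\check h)$ coming from $\frac12 v^2$, since $\frac{\sigma^2\beta m}{2}=\frac{\gamma}{m}$. After pulling back, this lets me write $D\Phi_{-t}b_D$ as $-\frac12\beta m\tilde{A}\nabla_{\tilde z}[\tfrac12|\Phi^2_t|^2]$. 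Adding the It\^o correction $\frac12\tr(\tilde{A}D^2\ln u)$ and the drift term $\nabla\ln u\cdot\tilde b$ then produces the symmetric bracket appearing in \eqref{D}.

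\textbf{Energy part.} For $\check{h}_{\rho_t}(\Phi_t(\tilde{Z}_t))$ the chain rule gives two contributions. The first, $\nabla\check h\cdot\partial_t\Phi_t=\nabla\check h\cdot b_H$, is the explicit term in \eqref{D}. The second comes from $\partial_t\check{h}_{\rho_t}=\frac{1}{2m}\partial_t(K*f_t)$. I will compute it by differentiating $\E_{\bar\P}[K(x-Y^1_t)]$ along the independent copy $Y$, i.e.\ applying It\^o's formula in $Y$. Since the $Y^1$-component has dynamics $\rmd Y^1=Y^2\,\rmd t$, only the reversible drift survives, giving $\frac{\beta}{2}\int \binom{\nabla K(\Phi^1_t(\tilde z)-y^1)}{0}b_H(y)f_t(y)\,\rmd y$. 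The sign here is to be checked, using that $\nabla K$ is odd because $K$ is symmetric. Collecting all terms yields \eqref{D}. This step is pure bookkeeping, the only care being to track where $D\Phi_{-t}$ acts.

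\textbf{Integrated identity and main obstacle.} For \eqref{thm1} and \eqref{average}, I integrate $\tilde{\mathcal D}_t$ against $u_t$ and integrate by parts. The divergence and Hessian terms combine with $\tilde A\nabla\ln u$ into $-\frac12\int|\nabla\tilde\theta\,\tilde G|^2u$. The terms in $b_H$ cancel in total: the $\nabla\check h\cdot b_H$ term together with the $K$-term reproduces $\frac{\rmd}{\rmd t}H_f$, which vanishes by \eqref{energyc2}. I expect this cancellation, with the factor $\frac12$ versus $1$ in $\check h$ versus $h$, to be the delicate algebraic point.

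The main analytic obstacle is justifying that the stochastic integral is a true martingale and that all terms are integrable near $t_0=0$. For this I would use the moment bounds \eqref{moment}, the bounded higher derivatives of $U,K$ in Assumption \ref{reg} (which give bounds on $D\Phi_{-t}$ and $\Gamma$ over $[0,T]$ via Gronwall), and the Fisher-information bound \eqref{intln}. Because $\tilde G=D\Phi_{-t}G$, only $\nabla_{\tilde z}\tilde\theta\,D\Phi_{-t}G$, i.e.\ $v$-gradients in original coordinates, enter the bracket. Consequently \eqref{intln} suffices, and finiteness $>-\infty$ follows. Finally, \eqref{average} follows by taking expectations and using $\tilde{\F}_t(u_t)=\E[\tilde\theta_t]$.
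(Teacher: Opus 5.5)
Your route matches the paper's: It\^o's formula for $\tilde\theta_t(\tilde Z_t,\rho_t)$ driven by \eqref{MFLE3} and \eqref{VFPE2}, the measure dependence handled through the independent copy, integration by parts against $u_t$, cancellation of the $b_H$-terms, and a martingale/conditional-expectation argument. Using ordinary It\^o in $Y$ instead of the Lions-derivative formula is legitimate here, since $\rho\mapsto K*\rho$ is linear. Your remark that $\tilde G^{\top}\nabla_{\tilde z}\tilde\theta_t=\sigma(\nabla_v\ln f_t+\beta m v)$ at $z=\Phi_t(\tilde z)$ is also the right way to use \eqref{intln}.

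The step that fails is the claim that the bookkeeping ``produces the symmetric bracket appearing in \eqref{D}''. Put $\eta_t:=\check h_{f_t}\circ\Phi_t$. At fixed $\tilde z$ one has $\partial_t\ln u_t=\frac{1}{2u_t}{\rm div}(\tilde A u_t\nabla\tilde\theta_t)$. On the other hand, $\nabla\tilde\theta_t\cdot\tilde b$ contains $\nabla\tilde\theta_t\cdot D\Phi_{-t}b_D=-\frac{\beta m}{2}\langle\nabla\tilde\theta_t,\tilde A\nabla\eta_t\rangle$. Using also $\Gamma(\Phi_t(\tilde z))={\rm div}\,\tilde A(\tilde z)$ (Jacobi's formula with $\det D\Phi_t=1$), the drift is
\begin{equation*}
\begin{aligned}
&\tfrac12\big\langle\nabla\tilde\theta_t,\tilde A(\nabla\ln u_t-\beta m\nabla\eta_t)\big\rangle+\Gamma(\Phi_t)\cdot\nabla\tilde\theta_t+\tr\big(\tilde A D^2\tilde\theta_t\big)\\
&\quad+\beta m\,\nabla\check h_{f_t}(\Phi_t)\cdot b_H(\Phi_t)-\frac{\beta}{2}\int_{\R^{2d}}\nabla K(\Phi^1_t-y^1)\cdot y^2\, f_t(y)\,\rmd y .
\end{aligned}
\end{equation*}
This is \eqref{D} with the sign of the $\beta m\tilde A\nabla\check h_{f_t}$ term reversed and with the opposite sign on the $K$-term. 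The $K$-sign follows from $\partial_t(K*f_t)(x)=-\int\nabla K(x-y^1)\cdot y^2 f_t(y)\,\rmd y$, which settles the sign you left open.

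These signs are not cosmetic.
\begin{itemize}
\item With the symmetric bracket, integration by parts gives $-\tilde{\I}(u_t)+\frac{2\gamma}{m}\big(\beta m\,\E|V_t|^2-d\big)$ rather than $-\tilde{\I}(u_t)$.
\item With the printed $K$-sign, the two $b_H$-terms sum in expectation to $-\beta\int\langle\nabla K*f_t(x),v\rangle f_t\,\rmd z$. With the corrected sign they sum to $\beta m\int f_t\, b_H\cdot\nabla h_{f_t}\,\rmd z=0$, which is the \eqref{energyc2} cancellation you invoke.
\end{itemize}
So \eqref{D} as printed is incompatible with \eqref{thm1}, and also with the $-\beta\gamma|V_{t_0}|^2$ term in \eqref{cortra}. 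Your argument proves the corrected formula; \eqref{thm1} and \eqref{average} are right as stated. The paper's proof makes the same two slips, in the last line of its computation of ${\rm I}$ and in the sign of $\partial_\rho\tilde\theta$ in ${\rm III}$. Its expectation computation implicitly uses the correct signs.

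Separately, the tools you list do not justify the limit \eqref{thm}. Dominated convergence for $\E[\,\cdot\,|\mathcal G_{t_0}]$ needs an integrable majorant of $\sup_{s\in[t_0,t_0+\delta]}|\tilde{\mathcal D}_s(\tilde Z_s)|$. But $\tilde{\mathcal D}$ contains $|\nabla_v\ln f_s|^2$ and $\Delta_v\ln f_s$ evaluated along the path, and neither \eqref{moment} nor \eqref{intln} controls these. Indeed, \eqref{intln} is a time-integrated bound on first derivatives only, and only on $[t,T]$ with $t>0$. So it does not even give the martingale property on $[0,t]$ when $t_0=0$. You need additional local-in-time control of $\nabla_v\ln f$ and $D_v^2\ln f$. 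The paper has the same hole: it passes from $\int_{t_0}^t|\E[\tilde{\mathcal D}_s(\tilde Z_s)|\mathcal G_{t_0}]|\,\rmd s<\infty$ directly to the limit.
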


\begin{proof}
Since we will apply It\^{o}'s formula for McKean-Vlasov stochastic differential equations (see, for instance, \cite{BLPR}, Theorem 7.1 or \cite{CD}, Proposition 5.102), we first introduce an independent copy of $Z$.
Specifically, let $Y=(Y^1, Y^2)^{\top}$ be an independent copy of $Z$ defined on $(\bar{\Omega}, \bar{\mathcal{G}}, \bar{\P})$.

Recalling that $\tilde{\theta}_t(\tilde{z}, f_t)=\ln u_t(\tilde{z}) + \beta m \check{h}_{f_t}(\Phi_t(\tilde{z})) $, we obtain
\begin{equation*}
\begin{aligned}
\rmd \tilde{\theta}_t(\tilde{Z}_t, \rho_t)= &\partial_t \tilde{\theta}_t(\tilde{Z}_t, \rho_t) \rmd t +\partial_{\tilde{z}}\tilde{\theta}_t(\tilde{Z}_t, \rho_t) \rmd \tilde{Z}_t +\dfrac{1}{2}\partial_{ij}\tilde{\theta}_t(\tilde{Z}_t, \rho_t) \tilde{A}^{ij}(\tilde{Z}_t) \rmd t \\
&+\E_{\bar{\P}}\Big[ \big\langle \partial_{\rho}\tilde{\theta}_t(\tilde{Z}_t, \rho_t), (b_H+b_D) \big \rangle (Y_t)+\dfrac{1}{2}\tr \big(\partial_y\partial_{\rho}\tilde{\theta}_t(\tilde{Z}_t, \rho_t)(Y_t) A \big) \Big] \rmd t.
\end{aligned}
\end{equation*}
We denote
\begin{equation}\label{decomposition}
\rmd \tilde{\theta}_t(\tilde{Z}_t, \rho_t)=({\rm I}+{\rm III})\rmd t + ({\rm II}) \rmd B_t,
\end{equation}
where
\begin{equation*}
\begin{aligned}
{\rm I}:= &\partial_t \tilde{\theta}_t(\tilde{Z}_t, \rho_t)
+\nabla \tilde{\theta}_t(\tilde{Z}_t, \rho_t) \tilde{b}(\tilde{Z}_t) +\dfrac{1}{2}\partial_{ij}\tilde{\theta}_t(\tilde{Z}_t, \rho_t) \tilde{A}^{ij}(\tilde{Z}_t), \\
{\rm II}:= &\nabla \tilde{\theta}_t(\tilde{Z}_t, \rho_t) \tilde{G}(\tilde{Z}_t), \\
{\rm III}:= &\E_{\bar{\P}}\Big[ \big\langle \partial_{\rho}\tilde{\theta}_t(\tilde{Z}_t, \rho_t), (b_H+b_D) \big \rangle (Y_t)  +\dfrac{1}{2}\tr \big(\partial_y\partial_{\rho}\tilde{\theta}_t(\tilde{Z}_t, \rho_t) (Y_t) A \big) \Big].
\end{aligned}
\end{equation*}

Since $G$ is diagonal and $\tilde{A}(\tilde{z})=D\Phi_{-t}(\Phi_t(\tilde{z}))G G^{\top} (D\Phi_{-t}(\Phi_t(\tilde{z})))^{\top}$, it follows that
\begin{equation*}
\partial_{ij}\tilde{\theta}_t(\tilde{Z}_t, \rho_t) \tilde{A}^{ij}(\tilde{Z}_t)=\tr(\tilde{A}(\tilde{Z}_t)\Delta \tilde{\theta}_t (\tilde{Z}_t, \rho_t) ).
\end{equation*}
A direct calculation yields
\begin{equation*}
\begin{aligned}
b_D(\Phi_t(\tilde{Z}_t))= & -\dfrac{\beta m}{2}D\Phi_{-t}(\Phi_t(\tilde{Z}_t))^{\top} GG^{\top} \nabla h_{f_t}(\Phi_t(\tilde{Z}_t)) \\
=& -\dfrac{\beta m}{2}D\Phi_{-t}(\Phi_t(\tilde{Z}_t))^{\top} GG^{\top} \nabla \check{h}_{f_t}(\Phi_t(\tilde{Z}_t)).
\end{aligned}
\end{equation*}
Moreover, by the definition of the flow $\Phi$, we have
\begin{equation*}
\dfrac{\rmd}{\rmd t} \Phi_t(\tilde{Z}_t) =b_H(\Phi_t(\tilde{Z}_t)).
\end{equation*}
Therefore, we obtain
\begin{equation}\label{i}
\begin{aligned}
{\rm I}=& \dfrac{1}{2u_t(\tilde{Z}_t)}{\rm div}\big(\tilde{A}(\tilde{Z}_t) u_t(\tilde{Z}_t) \nabla \tilde{\theta}_t(\tilde{Z}_t, \rho_t)\big)
+ \beta m \nabla \check{h}_{\rho_t}(\Phi_t(\tilde{Z_t})) \dfrac{\rmd }{\rmd t} \Phi_t(\tilde{Z_t}) \\
& +\nabla \tilde{\theta}_t(\tilde{Z}_t, \rho_t) \tilde{b}(\tilde{Z}_t) +\dfrac{1}{2}\tr(\tilde{A}(\tilde{Z}_t) \Delta \tilde{\theta}_t(\tilde{Z}_t, \rho_t)) \\
=& \dfrac{1}{2} \nabla \tilde{\theta}_t(\tilde{Z}_t, \rho_t) \Big( {\rm div} \tilde{A}(\tilde{Z}_t) +\tilde{A}(\tilde{Z}_t) \nabla \ln u_t(\tilde{Z}_t) +\beta m \tilde{A}(\tilde{Z}_t) \nabla \check{h}_{\rho_t}(\Phi_t(\tilde{Z}_t)) +\Gamma(\Phi_t(\tilde{Z}_t))  \Big)\\
& + \beta m \nabla \check{h}_{\rho_t}(\Phi_t(\tilde{Z_t})) b_H( \Phi_t(\tilde{Z_t})) +\tr (\tilde{A}(\tilde{Z}_t) \Delta \tilde{\theta}_t(\tilde{Z}_t, \rho_t)).
\end{aligned}
\end{equation}
Using the Lions derivative (see Subsection 5.2 of \cite{CD}),
we have
\begin{equation}\label{iii}
\begin{aligned}
{\rm III}=\E_{\bar{\P}} \Big[ \Big\langle \dfrac{\beta}{2}\binom{\nabla K(X_t -Y_t^1)}{0}, (b_H+b_D)(Y_t) \Big \rangle  \Big]
+\E_{\bar{\P}}\Big[ -\dfrac{1}{4} \tr\Big( \partial_y \binom{\nabla K(X_t- Y_t^1)}{0} A \Big)  \Big].
\end{aligned}
\end{equation}
Since the first $d$ components of $b_D$ vanish and the first $d$ columns of the matrix $A$ are zero vectors, it follows that
\begin{equation*}
\begin{aligned}
{\rm III}=\E_{\bar{\P}} \Big[ \Big\langle \dfrac{\beta}{2}\binom{\nabla K(X_t -Y_t^1)}{0}, b_H(Y_t) \Big \rangle  \Big].
\end{aligned}
\end{equation*}
Consequently, we obtain
\begin{equation*}
\rmd \tilde{\theta}_t(\tilde{Z}_t, \rho_t)=\tilde{\mathcal{D}}_t(\tilde{Z}_{t}) \rmd t + \nabla \tilde{\theta}_t(\tilde{Z}_t, \rho_t) \tilde{G}(\tilde{Z}_t) \rmd B_t.
\end{equation*}

First, we note that
\begin{equation*}
\begin{aligned}
& \E\Big[ \E_{\bar{\P}} \Big[ \Big\langle \binom{\nabla K(X_t -Y_t^1)}{0}, b_H(Y_t) \Big \rangle  \Big] \Big ] \\
= & \E \big[ \langle  \nabla K*f_{t}(X_{t}), Y_{t}^1 \rangle \big] \\
= & \int_{\R^{2d}} \langle \nabla K*f_t(x), v \rangle f_t(z) \rmd z \\
= & 2 m\int_{\R^{2d}}  \nabla \check{h}_{f_t} (\Phi_t(\tilde{z})) b_H(\Phi_t(\tilde{z})) u_t(\tilde{z}) \rmd \tilde{z}.
\end{aligned}
\end{equation*}
Then for $\tilde{\mathcal{D}}_t(\tilde{Z}_{t})$, we have
\begin{equation*}
\begin{aligned}
\E[\tilde{\mathcal{D}}_t(\tilde{Z}_{t})]
= &\int_{\R^{2d}} \Big [ \dfrac{1}{2}{\rm div}(\tilde{A}(\tilde{z}) u_t(\tilde{z}) \nabla \tilde{\theta}_t(\tilde{z}, f_t)) +\nabla \tilde{\theta}_t(\tilde{z}, f_t) \tilde{b}(\tilde{z}) u_t(\tilde{z}) \\
& +\beta m \nabla \check{h}_{f_t}(\Phi_t(\tilde{z})) b_H( \Phi_t(\tilde{z}))u_t(\tilde{z})
 +\dfrac{1}{2}\tr(\tilde{A}(\tilde{z}) \Delta \tilde{\theta}_t(\tilde{z}, f_t)) u_t(\tilde{z}) \Big ]\rmd \tilde{z} \\
 & + \dfrac{\beta}{2} \E\Big[ \E_{\bar{\P}} \Big[ \Big\langle \binom{\nabla K(X_t -Y_t^1)}{0}, b_H(Y_t) \Big \rangle  \Big] \Big ] \\
= & \int_{\R^{2d}} \Big[  \nabla \tilde{\theta}_t(\tilde{z}, f_t)) D\Phi_{-t}(\Phi_t(\tilde{z})) b_D(\Phi_t(\tilde{z})) u_t(\tilde{z})
+\dfrac{1}{2} \nabla \tilde{\theta}_t(\tilde{z}, f_t)) \Gamma(\Phi_t(\tilde{z})) u_t(\tilde{z}) \\
& +\dfrac{1}{2}\tr(\tilde{A}(\tilde{z}) \Delta \tilde{\theta}_t(\tilde{z}, f_t)) u_t(\tilde{z}) \Big] \rmd \tilde{z}\\
= & \dfrac{1}{2} \int_{\R^{2d}} \Big[ \big \langle \nabla \tilde{\theta}_t(\tilde{z}, f_t)), -\beta m \tilde{A}(\tilde{z}) \nabla \check{h}_{f_t}(\Phi_t(\tilde{z})) \big \rangle
+ \nabla \tilde{\theta}_t(\tilde{z}, f_t)) \Gamma(\Phi_t(\tilde{z})) u_t(\tilde{z}) \\
& - \big \langle \nabla \tilde{\theta}_t(\tilde{z}, f_t)), \tilde{A}(\tilde{z}) \nabla u_t(\tilde{z}) \big \rangle -\nabla \tilde{\theta}_t(\tilde{z}, f_t)) u_t(\tilde{z}) {\rm div}{\tilde{A}(\tilde{z})} \Big]
 \rmd \tilde{z} \\
= & -\dfrac{1}{2} \int_{\R^{2d}} \big \langle \nabla \tilde{\theta}_t(\tilde{z}, f_t)), \tilde{A}(\tilde{z}) \nabla u_t(\tilde{z}) +\beta m \tilde{A}(\tilde{z}) \nabla  \check{h}_{f_t}(\Phi_t(\tilde{z})) \big \rangle
\rmd \tilde{z} \\
= & -\dfrac{1}{2} \int_{\R^{2d}} \big | \nabla \big( \ln u_t(\tilde{z})+\beta m \check{h}_{f_t}(\Phi_t(\tilde{z})) \big) D\Phi_{-t}(\Phi_t(\tilde{z}))G \big |^2 u_t(\tilde{z}) \rmd \tilde{z}.
\end{aligned}
\end{equation*}
Using \eqref{intln} together with $|D\Phi_t|=1$, we deduce
\begin{equation}\label{int}
\E\big[ \int_t^T \big|\nabla_{\tilde{v}} \ln (u_s(\tilde{Z}_s)) \big|^2 \rmd s \big]< +\infty, ~\text{ for all } t\in (0,T].
\end{equation}
Combining this computation with \eqref{moment} yields
\begin{equation}\label{estimate}
\begin{aligned}
& \int_0^T \int_{\R^{2d}} \big| \nabla \big(\ln u_t(\tilde{z})+\beta m \check{h}_{f_t}(\Phi_t(\tilde{z})) \big) D\Phi_{-t}(\Phi_t(\tilde{z}))G \big|^2 u_t(\tilde{z}) \rmd \tilde{z} \rmd t \\
\leq& 2\E\big[ \int_0^T |\nabla_{\tilde{v}} \ln u_t(\tilde{Z}_t) |^2 |D\Phi_{-t}(\Phi_t(\tilde{Z}_t))G |^2 \rmd t \big] \\
 & + 2 \beta^2 m^2\E\big[ \int_0^T \sup_{0\leq t\leq T}|\tilde{V}_t|^2 |D\Phi_{-t}(\Phi_t(\tilde{Z}_t))G |^2 \rmd t \big]
<+\infty.
\end{aligned}
\end{equation}
Therefore, we conclude
\begin{equation}\label{martingale}
\mathbb{E}\big[\big| \tilde{\mathcal{D}}_t(\tilde{Z}_{t}) \big|\big]< +\infty, ~\E [ \langle {\rm II}, {\rm II} \rangle_T ]< +\infty,
\end{equation}
and \eqref{thm1} holds.

Using \eqref{D} and \eqref{decomposition}, we obtain
\begin{equation}\label{57}
\begin{aligned}
\mathbb{E}[\tilde{\theta}_{t}(\tilde{Z}_{t}, \rho_{t})| \mathcal{G}_{t_0}] -\tilde{\theta}_{t_0}(\tilde{Z}_{t_0}, \rho_{t_0})
=\mathbb{E}\Big[\int_{t_0}^{t} \tilde{\mathcal{D}}_s(\tilde{Z}_{s}) \rmd s \Big| \mathcal{G}_{t_0} \Big].
\end{aligned}
\end{equation}
From Jensen's inequality and \eqref{martingale}, we deduce
\begin{equation*}
\begin{aligned}
 \mathbb{E}\Big[\int_{t_0}^{t} \big|\mathbb{E}(\tilde{\mathcal{D}}_s(\tilde{Z}_{s}) | \mathcal{G}_{t_0}) \big| \rmd s\Big]
\leq \int_{t_0}^{t} \mathbb{E}\Big[\mathbb{E}\big(\big| \tilde{\mathcal{D}}_s(\tilde{Z}_{s}) \big| \Big| \mathcal{G}_{t_0}\big)\Big] \rmd s
= \int_{t_0}^{t} \mathbb{E}\big[\big|\tilde{\mathcal{D}}_s(\tilde{Z}_{s}) \big|\big]\rmd s < +\infty,
\end{aligned}
\end{equation*}
and hence
\begin{equation*}
\int_{t_0}^{t} \big|\mathbb{E}(\tilde{\mathcal{D}}_s(\tilde{Z}_{s}) | \mathcal{G}_t) \big| \rmd s < +\infty, ~ \P-a.s.
\end{equation*}
It follows that
\begin{equation*}
\begin{aligned}
 \lim_{t \downarrow t_0}  \dfrac{\mathbb{E}\big [\int_{t_0}^{t} \tilde{\mathcal{D}}_s(\tilde{Z}_{s}) \rmd s \big| \mathcal{G}_{t_0} \big]}{t-t_0}
=  \lim_{t \downarrow t_0} \dfrac{\int_{t_0}^{t} \mathbb{E}\big [\tilde{\mathcal{D}}_s(\tilde{Z}_{s}) \big| \mathcal{G}_{t_0} \big]\rmd s}{t-t_0}
=  \mathbb{E}\big [\tilde{\mathcal{D}}_{t_0}(\tilde{Z}_{t_0}) \big| \mathcal{G}_{t_0} \big]
= \tilde{\mathcal{D}}_{t_0}(\tilde{Z}_{t_0}), ~ \P-a.s.
\end{aligned}
\end{equation*}
Combining this equation with \eqref{57} yields \eqref{thm}.

Taking expectations in \eqref{decomposition} and using \eqref{martingale}, we obtain
\begin{equation*}
\tilde{\F}_t(u_{t})-\tilde{\F}_{t_0}(u_{t_0})=
-\int_{t_0}^{t} \tilde{\I}(u_s) \rmd s.
\end{equation*}
The Lebesgue differentiation theorem then implies \eqref{average}.
The proof is complete.
\end{proof}

{\bf Step 2: Metric derivative.}
Now we use the Onsager operator to select a Wasserstein-type metric.
For the symmetric, positive semidefinite matrix $J:=\frac{1}{2}\tilde{A}$, we define the Onsager operator $\J$ at $u$ by
\begin{equation}\label{O}
\J(u) \psi :=-{\rm div}(J u \nabla \psi),
\end{equation}
where $\psi \in C^{\infty}(\R^{2d})$.
Equation \eqref{VFPE2} can be rewritten as
\begin{equation}\label{CE}
\partial_t u_t+ {\rm div}(J w_t u_t)=0,
\end{equation}
where $w_t(\tilde{z}):=- \nabla \ln u_t(\tilde{z}) -\beta m \nabla h_{f_t}(\Phi_t(\tilde{z}))$.
We define the Otto-Onsager scalar product at $\mu\in \mathcal{P}(\R^{2d})$ by
\begin{equation}\label{product}
\langle \xi_1, \xi_2 \rangle_{J, \mu}
:=\int_{\R^{2d}}\langle \nabla \psi_1, J \nabla \psi_2 \rangle \rmd \mu,
\end{equation}
and the associated norm by
\begin{equation}\label{norm}
\| \xi_i \|^2_{J,\mu}:=\langle \xi_i, \xi_i \rangle_{J, \mu},
\end{equation}
where $\xi_i=-{\rm div}(J \mu \nabla \psi_i), i=1,2$.
The following provides the definition of the metric.

\begin{de}[Otto-Onsager metric]\label{metric}
Given $\mu_0, \mu_1 \in \mathcal{P}(\R^{2d})$, we define {\em the Otto-Onsager metric} as
\begin{equation*}
\begin{aligned}
W_{J}^2(\mu_0, \mu_1):=\inf_{(\mu_t, w_t)} \Big\{ & \int_0^1 \bigg \| \dfrac{\partial \mu_t }{\partial t} \bigg \|^2_{J, \mu_t} \rmd t \bigg| \partial_t \mu_t+{\rm div} \big(J w_t \mu_t \big) =0 \\
& \text{ holds in the distributional sence} \Big\}.
\end{aligned}
\end{equation*}
\end{de}

Evidently, the Otto-Onsager metric is Wasserstein-type.
To ensure that the notions of absolutely continuous curves and tangent space are well-posed, we introduce an equivalence relation $\sim$.
Specifically, if two vector fields $w$ and $\tilde{w}$ satisfy ${\rm div}(J (w-\tilde{w}))=0$ we write $w \sim \tilde{w}$ and denote the corresponding equivalence class by $[w]$.
We then introduce the notions of absolutely continuous curves and the tangent space, following Definition 1.1.1 in \cite{AGS} and Subsection 4.1 of \cite{OTTO}.

\begin{de}[Absolutely continuous curve]\label{AC}
For a curve $\mu:(T_1, T_2)\rightarrow \mathcal{P}(\R^{2d})$, we say that $\mu$ is {\em an absolutely continuous curve} in $(\mathcal{P}(\R^{2d}), W_J)$ if there exists a function
$m\in L^1(T_1, T_2)$ such that
\begin{equation*}
W_J(\mu_{t_1}, \mu_{t_2}) \leq \int_{t_1}^{t_2}m_t\rmd t, ~ \text{for } T_1<t_1\leq t_2<T_2.
\end{equation*}
\end{de}

\begin{de}[Tangent space]\label{TS}
Let $\mu \in \mathcal{P}(\R^{2d})$, we define
\begin{equation*}
{\rm Tan}_{J, \mu} \mathcal{P}(\R^{2d}):=\overline{\{ [\nabla \zeta] : \zeta \in C_c^{\infty}(\R^{2d}) \}}^{| \cdot|_{J, \mu}},
\end{equation*}
where $|[\nabla \zeta]|_{J, \mu}^2:=\int_{\R^{2d}} \langle \nabla\zeta , J \nabla \zeta \rangle \rmd \mu$.
\end{de}

In analogy with the metric derivative analysis in Subsection 4.3 of \cite{OTTO}, we obtain the following lemma.

\begin{lem}\label{derivative}
For an absolutely continuous curve $\mu:(T_1, T_2)\rightarrow \mathcal{P}(\R^{2d})$, the metric derivative satisfies
\begin{equation*}
|\dot{\mu}_{t_0}|:=\lim_{t \downarrow t_0} \dfrac{W_J(\mu_t, \mu_{t_0})}{t-t_0} \leq | w_{t_0} |_{J, \mu_{t_0}} = \Big ( \int_{\R^{2d}} \langle w_{t_0}, J w_{t_0} \rangle \rmd \mu_{t_0} \Big )^{\frac{1}{2}},
~ \text{\rm for } T_1< t_0 <T_2,
\end{equation*}
with equality whenever $[w_{t_0}]\in {\rm Tan}_{J, \mu_{t_0}} \mathcal{P}(\R^{2d})$ and $\partial_t \mu_t+{\rm div} \big(J w_t \mu_t \big) =0$.
\end{lem}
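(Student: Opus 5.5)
The plan follows the Benamou--Brenier style argument of Subsection 4.3 of \cite{OTTO}, adapted to the degenerate weight $J$. The inequality and the equality are handled separately.

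\textbf{Upper bound.} Fix $t_0$ and a velocity field $w_t$ with $\partial_t\mu_t+{\rm div}(Jw_t\mu_t)=0$ on $[t_0,t]$. Reparametrize the restricted curve to $[0,1]$ by $s\mapsto \mu_{t_0+s(t-t_0)}$. The reparametrized curve is transported by $(t-t_0)w_{t_0+s(t-t_0)}$, so it is an admissible competitor in Definition \ref{metric}. This gives
\[
W_J^2(\mu_{t_0},\mu_t)\le (t-t_0)\int_{t_0}^{t}\int_{\R^{2d}}\langle w_s,Jw_s\rangle\,\rmd\mu_s\,\rmd s .
\]
Dividing by $(t-t_0)^2$ and letting $t\downarrow t_0$, Lebesgue differentiation (for a.e.\ $t_0$, or at every $t_0$ if $s\mapsto|w_s|^2_{J,\mu_s}$ is continuous, as it is for our smooth solutions) yields $\limsup W_J(\mu_t,\mu_{t_0})/(t-t_0)\le |w_{t_0}|_{J,\mu_{t_0}}$. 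We also need the limit to exist. For this I would first note the standard fact that the metric derivative of an absolutely continuous curve exists a.e., via the argument of Theorem 1.1.2 in \cite{AGS}, which uses only the metric axioms.

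\textbf{Lower bound when $[w_{t_0}]\in{\rm Tan}_{J,\mu_{t_0}}$.} The plan is to use duality against test gradients. Take $\zeta\in C_c^\infty(\R^{2d})$ and any admissible pair $(\nu_s,\eta_s)_{s\in[0,1]}$ connecting $\mu_{t_0}$ to $\mu_t$. By the continuity equation and Cauchy--Schwarz in the $J$-weighted inner product,
\[
\int\zeta\,\rmd\mu_t-\int\zeta\,\rmd\mu_{t_0}=\int_0^1\!\!\int\langle\nabla\zeta,J\eta_s\rangle\,\rmd\nu_s\,\rmd s\le \Big(\int_0^1|\nabla\zeta|_{J,\nu_s}^2\rmd s\Big)^{1/2}\Big(\int_0^1|\eta_s|^2_{J,\nu_s}\rmd s\Big)^{1/2}.
\]
Take the infimum over admissible pairs. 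Then use the weak continuity of $\nu_s$ together with the smallness of $W_J(\mu_t,\mu_{t_0})$, which forces $\nu_s$ close to $\mu_{t_0}$ when tested against the fixed smooth function $\langle\nabla\zeta,J\nabla\zeta\rangle$. This gives
\[
\int\zeta\,\rmd\mu_t-\int\zeta\,\rmd\mu_{t_0}\le (|\nabla\zeta|_{J,\mu_{t_0}}+o(1))\,W_J(\mu_t,\mu_{t_0}).
\]
On the left, dividing by $t-t_0$ and using the continuity equation for $\mu$ gives $\langle \nabla\zeta,w_{t_0}\rangle_{J,\mu_{t_0}}$. Hence $\langle\nabla\zeta,w_{t_0}\rangle_{J,\mu_{t_0}}\le|\nabla\zeta|_{J,\mu_{t_0}}|\dot\mu_{t_0}|$. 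Taking $\nabla\zeta_n\to w_{t_0}$ in $|\cdot|_{J,\mu_{t_0}}$, which is possible because $[w_{t_0}]$ lies in the closure defining the tangent space, yields $|w_{t_0}|^2_{J,\mu_{t_0}}\le |w_{t_0}|_{J,\mu_{t_0}}|\dot\mu_{t_0}|$, which is the reverse inequality. Note that only $Jw$ enters the continuity equation, so the equivalence relation $\sim$ makes the pairing well defined on classes.

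\textbf{Main obstacle.} The delicate step is the $o(1)$ replacement of $\nu_s$ by $\mu_{t_0}$ inside $\int_0^1|\nabla\zeta|^2_{J,\nu_s}\,\rmd s$. Since $J$ is only positive semidefinite, $W_J$ does not control the usual Wasserstein distance. My first attempt is to bound $\frac{\rmd}{\rmd s}\int\langle\nabla\zeta,J\nabla\zeta\rangle\rmd\nu_s$ by Cauchy--Schwarz, using the fact that $g:=\langle\nabla\zeta,J\nabla\zeta\rangle$ is smooth and compactly supported, so $|J^{1/2}\nabla g|$ is bounded. This gives $|\int g\,\rmd\nu_s-\int g\,\rmd\mu_{t_0}|\le C_\zeta W_J(\mu_t,\mu_{t_0})\to0$, provided $J$ is bounded on ${\rm supp}\,\zeta$, which holds by smoothness of $\Phi$. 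This plan works under the bare assumption that $J$ is smooth with locally bounded entries, independent of the particular structure of $\tilde A$.
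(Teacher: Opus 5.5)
Your argument is correct, and it provides an actual proof where the paper has none. The paper only asserts Lemma \ref{derivative} ``in analogy with'' Subsection 4.3 of \cite{OTTO}. That is Otto's formal Riemannian picture: the metric tensor is evaluated on the gradient representative of $\partial_t\mu$, and the metric derivative equals the norm of that representative essentially by construction.

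You instead adapt the rigorous $W_2$ argument (Theorem 8.3.1 and Section 8.4 of \cite{AGS}) to the $J$-weighted setting.
\begin{itemize}
\item The upper bound is immediate from the dynamic definition, using the rescaled curve itself as a competitor.
\item The lower bound comes from the duality estimate $\langle\nabla\zeta,w_{t_0}\rangle_{J,\mu_{t_0}}\le|\nabla\zeta|_{J,\mu_{t_0}}\liminf_{t\downarrow t_0}W_J(\mu_t,\mu_{t_0})/(t-t_0)$, followed by density of gradients.
\end{itemize}
The genuinely new ingredient is your bound $|\int g\,\rmd\nu_s-\int g\,\rmd\mu_{t_0}|\le\|J^{1/2}\nabla g\|_\infty\int_0^1|\eta_r|_{J,\nu_r}\,\rmd r$ along a near-optimal path. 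It replaces the optimal-plan estimates of the $W_2$ theory, which are not directly available for the degenerate and possibly infinite dynamic distance $W_J$. It also needs only that $J$ be $C^1$ and locally bounded.

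Doing it properly also exposes hypotheses that the citation hides.
\begin{itemize}
\item \emph{Pointwise in time.} The conclusion holds at Lebesgue points in time. For the lower bound these must hold simultaneously for a countable $C^1$-dense family of test functions, because your $\zeta_n$ depend on $t_0$. It holds at \emph{every} $t_0$, as the lemma states and Theorem \ref{wjder} uses, only under time continuity.
\item \emph{Meaning of tangency.} The hypothesis must mean that the representative $w_{t_0}$ itself lies in the $|\cdot|_{J,\mu_{t_0}}$-closure of gradients. Read at the level of classes the lemma is false: a divergence-free rotation field at a stationary Gaussian has class $[0]\in{\rm Tan}$ but positive norm, while $|\dot\mu|=0$.
\item \emph{Equivalence relation.} Your remark that the pairing is well defined on classes requires the relation ${\rm div}(J(w-\tilde w)\mu)=0$, not the $\mu$-free version written in the paper.
\item \emph{Fixed $J$.} Your argument assumes a single fixed matrix field $J$, as the lemma does. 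In Theorem \ref{wjder}, however, $J=\frac12\tilde A$ varies with $t$ through $\Phi_t$, which neither the lemma nor your proof addresses.
\end{itemize}
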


We are now in a position to characterize the metric derivative of the curve $t \mapsto \mu_t$.

\begin{thm}\label{wjder}
Suppose that Assumption \ref{reg} holds. Then the metric derivative of the curve $t \mapsto \mu_t$ is given by
\begin{equation}\label{mder}
|\dot{\mu}_{t_0}|=\lim_{t \downarrow t_0} \dfrac{W_J(\mu_t, \mu_{t_0})}{t-t_0} = | w_{t_0} |_{J,\mu_{t_0}} =(\tilde{\I}(u_{t_0}))^{\frac{1}{2}}, ~ \text{\rm for } 0< t_0 <T.
\end{equation}
\end{thm}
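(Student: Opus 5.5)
The plan is to apply Lemma \ref{derivative} to the curve $t\mapsto\mu_t$ with the velocity field $w_t=-\nabla\ln u_t-\beta m\nabla h_{f_t}(\Phi_t(\cdot))$ from \eqref{CE}. This requires three things: (a) the curve is absolutely continuous in $(\mathcal{P}(\R^{2d}),W_J)$; (b) $[w_{t_0}]\in{\rm Tan}_{J,\mu_{t_0}}\mathcal{P}(\R^{2d})$; (c) $|w_{t_0}|^2_{J,\mu_{t_0}}=\tilde{\I}(u_{t_0})$.

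\textbf{Absolute continuity.} Since $\mu_t$ solves \eqref{CE} in the distributional sense, restricting and rescaling the pair $(\mu_s,w_s)_{s\in[t_1,t_2]}$ gives an admissible competitor in Definition \ref{metric}. Cauchy--Schwarz then yields
\[
W_J(\mu_{t_1},\mu_{t_2})\le\int_{t_1}^{t_2}|w_s|_{J,\mu_s}\,\rmd s .
\]
By \eqref{estimate}, $s\mapsto|w_s|_{J,\mu_s}$ lies in $L^2(0,T)\subset L^1(0,T)$, so Definition \ref{AC} holds. Lemma \ref{derivative} then already gives $|\dot\mu_{t_0}|\le|w_{t_0}|_{J,\mu_{t_0}}$.

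\textbf{Identification of the norm.} With $J=\frac12\tilde A=\frac12 D\Phi_{-t}GG^\top D\Phi_{-t}^\top$ we have
\[
\langle w,Jw\rangle=\tfrac12\,|w^\top D\Phi_{-t}(\Phi_t(\tilde z))G|^2 .
\]
It remains to replace $h_{f_t}$ by $\check h_{f_t}$ inside $w$. The two differ only by $\frac1{2m}\nabla_xK*f_t$, which lies in $x$-directions in the original coordinates. Because $G$ has zero $x$-block, this difference is annihilated after composing with the chain rule $\nabla_{\tilde z}[\,\cdot\,\circ\Phi_t]=(\nabla\,\cdot)\,D\Phi_t$ and then $D\Phi_{-t}G$, using $D\Phi_tD\Phi_{-t}={\rm I}$. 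The same cancellation was used in the proof of Theorem \ref{fder} when writing $b_D$ via $h$ or $\check h$. This should give $|w_{t_0}|^2_{J,\mu_{t_0}}=\tilde{\I}(u_{t_0})$ by \eqref{average}. I will double-check this chain-rule convention against the one used there.

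\textbf{Tangency (the main obstacle).} The field $w_{t_0}$ is a gradient $\nabla\zeta$ with $\zeta=-\ln u_{t_0}-\beta m h_{f_{t_0}}\circ\Phi_{t_0}$, but $\zeta$ is smooth and not compactly supported. I will use cutoffs $\chi_R(\tilde z)=\chi(\tilde z/R)$ and set $\zeta_R=\chi_R\zeta$. Then
\[
\nabla\zeta_R-\nabla\zeta=(\chi_R-1)\nabla\zeta+\zeta\nabla\chi_R .
\]
The first term tends to zero in $|\cdot|_{J,\mu_{t_0}}$ by dominated convergence, since $|w_{t_0}|_{J,\mu_{t_0}}<\infty$ for a.e.\ $t_0$ by \eqref{estimate}. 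For the second term I need $\int|\zeta|^2|\nabla\chi_R|^2\,\rmd\mu_{t_0}\lesssim R^{-2}\int_{R\le|\tilde z|\le2R}\zeta^2\,\rmd\mu_{t_0}\to0$. The $h$ part of $\zeta$ grows at most quadratically, so the fourth moments in \eqref{moment} handle it. For the $\ln u$ part I would use Gaussian-type bounds on $\ln f_t$ from \cite{KRTY}, or alternatively a truncation $\max(\min(\zeta,N),-N)$ before cutting off. This step is where the regularity input is most delicate.

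\textbf{Conclusion.} Granting (b), Lemma \ref{derivative} yields equality in the metric derivative bound. Combined with (c), this gives \eqref{mder} for every $t_0\in(0,T)$ at which \eqref{estimate} makes $\tilde{\I}(u_{t_0})$ finite. By the positivity and smoothness results quoted from \cite{KRTY}, this should hold for all $t_0>0$.
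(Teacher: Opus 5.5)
Your proposal is correct and follows essentially the paper's route. You get absolute continuity from \eqref{estimate}, prove tangency of $[w_{t_0}]$ by truncating the logarithm and then cutting off with $\chi_R$, and conclude with Lemma \ref{derivative}; the paper's tangency step uses a smooth truncation $g_M(\ln u_{t_0})$, which is your fallback rather than any Gaussian bound on $\ln f_t$, and controls the quadratic part of the cutoff error by \eqref{moment}. Your chain-rule check that the $\frac{1}{2m}K*f_t$ discrepancy between $h_{f_t}$ and $\check h_{f_t}$ is annihilated by the zero $x$-block of $G$, so that $|w_{t_0}|^2_{J,\mu_{t_0}}=\tilde{\I}(u_{t_0})$, is a step the paper leaves implicit and is worth keeping.
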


\begin{proof}
Let
\begin{equation*}
m_t:= \Big( \sigma^2 \int_{\R^{2d}} \frac{|\nabla u_t|^2}{u_t}\rmd \tilde{z} + \int_{\R^{2d}} 2\beta \gamma |\tilde{v}|^2 u_t \rmd \tilde{z} \Big)^{\frac{1}{2}},~ t\in [0,T].
\end{equation*}
We obtain
\begin{equation*}
W_J(\mu_{t_1}, \mu_{t_2}) \leq \int_{t_1}^{t_2}  m_t  \rmd t < +\infty
\end{equation*}
by \eqref{estimate}.
This shows that $\mu$ is an absolutely continuous curve. By Lemma \ref{derivative}, it follows that $|\dot{\mu}_t| \leq | w_t |_{J, \mu_t}$.

Next, we prove that $[w_t]\in {\rm Tan}_{J, \mu_t} \mathcal{P}(\R^{2d})$.
By the definition of the equivalence relation $\sim$, we have $w_t \sim \nabla_{\tilde{v}} \ln u_t + \beta m \tilde{v}$.
Let $g_M\in C^{\infty}(\R)$ be such that
\begin{equation*}
g_M(\tau):=
\begin{cases}
 M+\frac{1}{2}, \quad & \tau >M+1, \\
 \bar{g}(\tau), &  M \leq |\tau| \leq M+1, \\
 \tau, & |\tau|<M, \\
 -M-\frac{1}{2},  & \tau<-M-1,
\end{cases}
\end{equation*}
where $M>0$ is a constant and $\bar{g}$ is a smooth function satisfying $0< \bar{g}'(\tau)<1$.
Define
$$h_M(\tilde{z}):=-g_M(\ln u_t(\tilde{z})) -\frac{1}{2}\beta m \tilde{v}^2.$$
Then $\nabla_{\tilde{v}} h_M (\tilde{z})= -g_M'(\ln u_t(\tilde{z})) \nabla_{\tilde{v}}\ln u_t(\tilde{z}) -\beta m \tilde{v}$ and
\begin{equation*}
\begin{aligned}
& | \nabla_{\tilde{v}} h_M (\tilde{z})- \nabla_{\tilde{v}} \ln u_t(\tilde{z}) - \beta m \tilde{v} | \\
=& | g_M'(\ln u_t(\tilde{z})) -1 | \cdot | \nabla_{\tilde{v}}\ln u_t(\tilde{z}) | \\
\leq & | \nabla_{\tilde{v}}\ln u_t(\tilde{z}) | \cdot 1_{\{ |\ln u_t|>M \}} \xrightarrow{\text{a.e.}} 0, ~ \text{ as } M \rightarrow +\infty.
\end{aligned}
\end{equation*}
Combining this with \eqref{int},
we deduce
\begin{equation*}
\begin{aligned}
\int_{\R^{2d}} | \nabla_{\tilde{v}} h_M (\tilde{z})- \nabla_{\tilde{v}} \ln u_t(\tilde{z}) - \beta m \tilde{v} |^2  u_t(\tilde{z}) \rmd \tilde{z} 
\leq  \int_{\R^{2d}} | \nabla_{\tilde{v}}\ln u_t(\tilde{z}) |^2 u_t(\tilde{z}) \rmd \tilde{z} <+\infty.
\end{aligned}
\end{equation*}
Therefore,
\begin{equation*}
\begin{aligned}
& | [\nabla h_M]- [w_t] |_{J, \mu_t}  \\
= & \sigma^2 \int_{\R^{2d}} | \nabla_{\tilde{v}} h_M (\tilde{z})- \nabla_{\tilde{v}} \ln u_t(\tilde{z}) - \beta m \tilde{v} |^2 u_t(\tilde{z}) \rmd \tilde{z}
\rightarrow 0, ~ \text{ as } M \rightarrow +\infty.
\end{aligned}
\end{equation*}
Choose $\chi_R\in C_c^{\infty}(\R^{2d})$ such that
\begin{equation*}
\chi_R(\tilde{z}):=
\begin{cases}
 1, \quad & |\tilde{z}| < R, \\
 \bar{\chi}(\tilde{z}), &  R\leq | \tilde{z}|< 2R , \\
 0, & | \tilde{z}| \geq 2R,
\end{cases}
\end{equation*}
where $\bar{\chi}$ is a smooth function satisfying $| \nabla \bar{\chi}(\tilde{z}) |< \frac{2}{R} $.
Set $\zeta_{M, R}(\tilde{z}):= \chi_R(\tilde{z}) h_M(\tilde{z})$.
Then $\nabla_{\tilde{v}} \zeta_{M, R}(\tilde{z})=\chi_{R}(\tilde{z}) \nabla_{\tilde{v}} h_M(\tilde{z}) + \nabla_{\tilde{v}} \chi_{R}(\tilde{z}) h_M(\tilde{z})$ and
\begin{equation}\label{xxx}
| \nabla_{\tilde{v}} \zeta_{M, R}(\tilde{z}) - \nabla_{\tilde{v}} h_M(\tilde{z})| \leq  | (\chi_{R}(\tilde{z})-1) \nabla_{\tilde{v}} h_M(\tilde{z})| + |\nabla_{\tilde{v}} \chi_{R}(\tilde{z}) h_M(\tilde{z}) |.
\end{equation}
For the first term on the right hand side of \eqref{xxx}, we have
\begin{equation*}
| (\chi_{R}(\tilde{z})-1) \nabla_{\tilde{v}} h_M(\tilde{z})| \xrightarrow{\text{a.e.}} 0,~ \text{ as } R \rightarrow +\infty.
\end{equation*}
The estimate \eqref{estimate} yields
\begin{equation*}
\int_{\R^{2d}} | (\chi_{R}(\tilde{z})-1) \nabla_{\tilde{v}} h_M(\tilde{z})|^2 u_t(\tilde{z}) \rmd \tilde{z} < +\infty.
\end{equation*}
Hence,
\begin{equation}\label{es1}
\int_{\R^{2d}} | (\chi_{R}(\tilde{z})-1) \nabla_{\tilde{v}} h_M(\tilde{z})|^2 u_t(\tilde{z}) \rmd \tilde{z} \rightarrow 0, ~ \text{ as } R \rightarrow +\infty.
\end{equation}
For the second term, it follows from \eqref{moment} that
\begin{equation}\label{es2}
\begin{aligned}
& \int_{\R^{2d}} |\nabla_{\tilde{v}} \chi_{R}(\tilde{z}) h_M(\tilde{z}) |^2 u_t(\tilde{z}) \rmd \tilde{z} \\
\leq & \dfrac{2}{R} \int_{\R^{2d}} (|M+1|^2 +\dfrac{1}{4}\beta^2 m^2 \tilde{v}^4 ) u_t(\tilde{z}) \rmd \tilde{z} \rightarrow 0, ~ \text{ as } R \rightarrow +\infty.
\end{aligned}
\end{equation}
Equations \eqref{es1} and \eqref{es2} therefore give
\begin{equation*}
\begin{aligned}
& | [\nabla \zeta_{M, R}]- [\nabla h_M] |_{J, \mu_t} \\
= &\sigma^2 \int_{\R^{2d}} | \nabla_{\tilde{v}} \zeta_{M, R}(\tilde{z}) - \nabla_{\tilde{v}} h_M(\tilde{z})|^2 u_t(\tilde{z}) \rmd \tilde{z} \\
\leq & 2\sigma^2 \int_{\R^{2d}} | (\chi_{R}(\tilde{z})-1) \nabla_{\tilde{v}} h_M(\tilde{z})|^2 u_t(\tilde{z}) \rmd \tilde{z} \\
& + 2\sigma^2 \int_{\R^{2d}} |\nabla_{\tilde{v}} \chi_{R}(\tilde{z}) h_M(\tilde{z}) |^2 u_t(\tilde{z}) \rmd \tilde{z}
\rightarrow 0, ~ \text{ as } R \rightarrow +\infty.
\end{aligned}
\end{equation*}

Using a diagonal argument, we can select a subsequence ${\zeta_n}$ with
$$\zeta_n(\tilde{z}):= \chi_{R(n)}(\tilde{z}) \big (-g_{M(n)} ( \ln u_t(\tilde{z})) -\frac{1}{2}\beta m \tilde{v}^2 \big)$$
such that
\begin{equation*}\label{zeta}
\begin{aligned}
 & | [\nabla \zeta_n]- [w_t] |_{J, \mu_t} \\
= &\sigma^2 \int_{\R^{2d}} | \nabla_{\tilde{v}} \zeta_n (\tilde{z})- \nabla_{\tilde{v}} \ln u_t(\tilde{z}) + \beta m \tilde{v} |^2  u_t(\tilde{z}) \rmd \tilde{z} \\
\leq & 2\sigma^2 \int_{\R^{2d}} | \nabla_{\tilde{v}} \zeta_n(\tilde{z}) - \nabla_{\tilde{v}} h_{M(n)}(\tilde{z})|^2 u_t(\tilde{z}) \rmd \tilde{z} \\
 & + 2\sigma^2
\int_{\R^{2d}} | \nabla_{\tilde{v}} h_{M(n)} (\tilde{z})- \nabla_{\tilde{v}} \ln u_t(\tilde{z}) - \beta m \tilde{v} |^2 u_t(\tilde{z}) \rmd \tilde{z}
\rightarrow 0, ~ \text{ as } n \rightarrow +\infty,
\end{aligned}
\end{equation*}
which shows that $[w_t]\in {\rm Tan}_{J, \mu_t} \mathcal{P}(\R^{2d})$. Applying Lemma \ref{derivative} then yields \eqref{mder}.
\end{proof}

{\bf Step 3: Metric slope of the free energy and gradient flow.}
Now, we establish the $W_J$-gradient flow formulation.
Our approach is to show that the curve
$t \mapsto \mu_t$ evolves in the direction of steepest descent of the free energy $\tilde{\F}$.
In analogy with the classical $W_2$-gradient flow theory in \cite{AGS}, we express the $W_J$-gradient flow at $\mu_0$ with form
\begin{equation}\label{grad}
{\rm grad}_{W_J} \tilde{\F}_0(\mu_0)=-{\rm div}\Big(J \mu_0 \nabla\big(\dfrac{\delta \tilde{\F}_0(\mu_0) }{\delta \mu}\big)\Big).
\end{equation}
Based on this formulations, we obtain the following result.

\begin{thm}
Suppose that Assumption \ref{reg} holds.
Then the gradient flow of the free energy functional $\tilde{\F}$ with respect to $W_J$ is given by
\begin{equation}\label{grad2}
\begin{aligned}
\partial_t u_t= & -{\rm grad}_{W_J} \tilde{\F}_t(u_t) \\
= & {\rm div}\Big( J u_t \nabla \big( \ln u_t+ \beta m h_{f_t}(\Phi_t(\tilde{z})) \big) \Big) \\
= & -\J(u_t) \big( \ln u_t+ \beta m h_{f_t}(\Phi_t(\tilde{z})) \big).
\end{aligned}
\end{equation}
\end{thm}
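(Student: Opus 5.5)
The plan has two parts. First I verify the identity \eqref{grad2} at the level of the PDE. Then I justify it variationally, in the sense of steepest descent with respect to $W_J$, using the metric derivative from Theorem \ref{wjder} and the dissipation identity from Theorem \ref{fder}.

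\textbf{Step 1: the PDE identity.} I compute the first variation of $\tilde{\F}_t$ at $u_t$, keeping track of the fact that $\check h_{f}$ depends on $f$ through the convolution. The factor $\frac{1}{2m}K*f$ is quadratic in $f=u\circ\Phi_{-t}$, and $|D\Phi_t|=1$ by Remark \ref{hrem}(ii). Together these give
\[
\frac{\delta\tilde{\F}_t}{\delta u}(u_t)=\ln u_t+1+\beta m\, h_{f_t}\circ\Phi_t,
\]
which accounts for the appearance of $h$ rather than $\check h$. Substituting into \eqref{grad} gives the second and third lines of \eqref{grad2}; this uses the definition \eqref{O} of $\J$ and the fact that constants have zero gradient.

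It then remains to show that \eqref{VFPE2} coincides with ${\rm div}(Ju_t\nabla(\ln u_t+\beta m h_{f_t}\circ\Phi_t))$. By \eqref{CE} this is the same as showing $w_t=-\nabla\frac{\delta\tilde{\F}_t}{\delta u}$, possibly up to the equivalence $\sim$. I would rewrite the right side of \eqref{VFPE2} in divergence form, $-{\rm div}\big((\tilde b-\tfrac12{\rm div}\tilde A)u_t\big)+\tfrac12{\rm div}(\tilde A\nabla u_t)$. Then I would use two identities already established in the proof of Theorem \ref{fder}: the relation $b_D=-\frac{\beta m}{2}D\Phi_{-t}^{\top}GG^{\top}\nabla h_{f_t}$, and the cancellation between $\frac12\Gamma$ and $\frac12{\rm div}\tilde A$. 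This cancellation holds because $\Phi_{-t}$ is volume preserving: the divergence of each row of $D\Phi_{-t}\circ\Phi_t$ vanishes, since $\Phi_{-t}$ is a symplectic map. With these, the drift reduces to $-J\beta m\nabla(h_{f_t}\circ\Phi_t)$ and the equation becomes exactly \eqref{grad2}.

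\textbf{Step 2: steepest descent.} Fix $t_0$. I compare $\mu$ with a perturbed curve $\hat\mu_t$ that starts at $\mu_{t_0}$ and satisfies $\partial_t\hat\mu+{\rm div}(J\hat w\hat\mu)=0$, where $[\hat w_{t_0}]\in{\rm Tan}_{J,\mu_{t_0}}\mathcal P(\R^{2d})$. A chain rule gives
\[
\frac{\rmd}{\rmd t}\tilde{\F}_t(\hat\mu_t)\big|_{t_0}=\Big\langle \nabla\frac{\delta\tilde{\F}_{t_0}}{\delta u},\,J\hat w_{t_0}\Big\rangle_{L^2(\mu_{t_0})}+\partial_t\tilde{\F}_t\big|_{t_0},
\]
where the explicit time dependence through $\Phi_t$ contributes the same term for both curves. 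Applying Cauchy--Schwarz in the $J$-inner product then gives
\[
\Big|\frac{\rmd}{\rmd t}\tilde{\F}(\hat\mu)-\partial_t\tilde{\F}\Big|\le \Big|\nabla\frac{\delta\tilde{\F}}{\delta u}\Big|_{J,\mu_{t_0}}\,|\dot{\hat\mu}_{t_0}|,
\]
with $|\dot{\hat\mu}_{t_0}|$ computed by Lemma \ref{derivative}. Equality holds exactly when $\hat w$ is a negative multiple of $\nabla\frac{\delta\tilde{\F}}{\delta u}$.

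For the actual curve, Theorem \ref{fder} gives $\frac{\rmd}{\rmd t}\tilde{\F}_t(u_t)=-\tilde{\I}(u_t)$, and Theorem \ref{wjder} gives $|\dot\mu_{t_0}|=\tilde{\I}(u_{t_0})^{1/2}$. Step 1 shows that $|\nabla\frac{\delta\tilde{\F}}{\delta u}|_{J,\mu_{t_0}}^2=\tilde{\I}(u_{t_0})$; here I need the $\check h$ versus $h$ discrepancy to be harmless, handled via $\sim$ or via the explicit-time term. So $\mu$ attains equality, meaning it is the steepest descent direction, and $\partial_tu=-{\rm grad}_{W_J}\tilde{\F}_t(u_t)$.

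\textbf{Main obstacle.} I expect the hardest part to be bookkeeping the nonlinear and time-dependent parts consistently. The dissipation \eqref{average} is written with $\check h$, while the gradient in \eqref{grad2} involves $h$. I would first try to show that the cross term $\int\langle\nabla(\ln u+\beta m\check h\circ\Phi),J\nabla(K*f\circ\Phi)\rangle u$ vanishes. This should work because $J$ only sees the $v$-directions after transport by $D\Phi_{-t}$, whereas $K*f$ depends only on $x$. If that fails, I would absorb the term into the explicit $\partial_t\tilde{\F}$ contribution, in line with the three-term structure of $\tilde{\mathcal D}$ in \eqref{D}.
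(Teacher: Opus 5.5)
Your proposal is correct up to two fixable slips in Step 1. Its decisive step matches the paper's: at fixed $t_0$ you compare the slope of $\tilde{\F}$ along $t\mapsto\mu_t$ with its slope along competitor curves through $\mu_{t_0}$, and you conclude by Cauchy--Schwarz in the $J$-weighted $L^2(\mu_{t_0})$ product, using Theorems \ref{fder} and \ref{wjder} for the actual curve.

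\textbf{How your route differs.} The execution differs in two ways.

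First, the paper never verifies the second and third lines of \eqref{grad2}. It takes \eqref{CE} with $w_t=-\nabla\ln u_t-\beta m\nabla(h_{f_t}\circ\Phi_t)$ for granted and does not explain why $h$ rather than $\check h$ appears. Your Step 1 derives both from the nonlinear first variation and from \eqref{VFPE2}. Since \eqref{grad} is taken as the definition of ${\rm grad}_{W_J}$, Step 1 already proves the displayed identities.

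Second, the paper realizes its competitors as actual diffusions. It replaces $b_D$ by $b_D-\frac{\gamma}{m}(0,\nabla_v\varepsilon)^{\top}$ with $\varepsilon\in C_c^\infty(\R^{2d})$, then reruns the It\^o computation of Theorem \ref{fder} and the metric-derivative argument of Theorem \ref{wjder} on the perturbed process. At $t_0$ this produces velocities $w_{t_0}-\beta m\nabla(\varepsilon\circ\Phi_{t_0})$, which form a dense subset of ${\rm Tan}_{J,\mu_{t_0}}\mathcal{P}(\R^{2d})$. Your class of arbitrary tangent velocities is therefore the same up to closure.

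What each route buys:
\begin{itemize}
\item The paper's route stays within processes where its trajectorial estimates hold, so it needs no abstract chain rule.
\item Your route is shorter. It isolates the explicit time dependence, which in fact vanishes at $u_{t_0}$, as your chain rule combined with Theorem \ref{fder} shows.
\item Your route gives the correct equality case, namely $[\hat w_{t_0}]$ a negative multiple of $[\nabla\frac{\delta\tilde{\F}}{\delta u}]$. The paper's stated condition $b_D^{\varepsilon}=C_{\varepsilon}b_D$ is not what Cauchy--Schwarz yields.
\item The price is that your chain rule along general curves must be justified. You can do this by restricting to $\hat w_{t_0}=w_{t_0}+\nabla\zeta$ with $\zeta\in C_c^\infty$ and realizing these as diffusions, as the paper does.
\end{itemize}

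\textbf{Corrections in Step 1.} Your conclusions are right, but two justifications need fixing.

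The identity you quote, $b_D=-\frac{\beta m}{2}D\Phi_{-t}^{\top}GG^{\top}\nabla h_{f_t}$, is misprinted in the proof of Theorem \ref{fder} and is false in general. The correct identity is $b_D=-\frac{\beta m}{2}GG^{\top}\nabla h_{f_t}$, because $\nabla_v h_{f_t}=v$ and $\frac{\beta m}{2}\sigma^2=\frac{\gamma}{m}$. Then $D\Phi_{-t}b_D=-\beta m J\nabla_{\tilde z}(h_{f_t}\circ\Phi_t)$ follows from $\nabla_{\tilde z}(g\circ\Phi_t)=(D\Phi_{-t}\circ\Phi_t)^{-\top}(\nabla g\circ\Phi_t)$.

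The cancellation ${\rm div}\,\tilde A=\Gamma\circ\Phi_t$ is the Piola identity. It is the \emph{columns} of $D\Phi_{-t}\circ\Phi_t=(D\Phi_t)^{-1}$, viewed as vector fields in $\tilde z$, that are divergence free, because $\det D\Phi_t=1$. The rows are not divergence free in general: for the shear $\Phi_{-t}(x,v)=(x+g(v),v)$, the first row has divergence $g''(\tilde v)$. Only volume preservation is used, not symplecticity.

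\textbf{Your ``main obstacle'' does not arise.} We have $J\nabla_{\tilde z}(g\circ\Phi_t)=\frac12(D\Phi_{-t}A\nabla g)\circ\Phi_t$. Since $A$ annihilates the gradient of $\frac{1}{2m}K*f$, which depends only on $x$, the $h$- and $\check h$-versions of $\nabla\frac{\delta\tilde{\F}}{\delta u}$ agree pointwise after applying $J$. Hence $|\nabla\frac{\delta\tilde{\F}}{\delta u}|^2_{J,\mu_{t_0}}=\tilde{\I}(u_{t_0})$ holds directly, and no fallback is needed.
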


\begin{proof}
We use the notation
\begin{equation*}
\begin{aligned}
\big|\partial \tilde{\F}_{t_0} \big|_{W_J}(\mu_{t_0})
:=  \lim_{t \downarrow t_0} \dfrac{ \tilde{\F}_t(\mu_t) -\tilde{\F}_{t_0}(\mu_{t_0}) }{W_J\big(\mu_t, \mu_{t_0}\big)}
\end{aligned}
\end{equation*}
to denote the Otto-Onsager metric slope of the free energy functional $\tilde{\F}$ along the curve $t \mapsto \mu_t$ with respect to $W_J$.

According to Theorems \ref{fder} and \ref{wjder}, we obtain
\begin{equation*}\label{slope1}
\begin{aligned}
\big|\partial \tilde{\F}_{t_0} \big|_{W_J}(\mu_{t_0})
= & - \Big(\dfrac{\gamma}{\beta m^2} \int_{\R^{2d}} | \nabla_{\tilde{v}} \ln u_{t_0} + \beta m \tilde{v} |^2 u_{t_0} \rmd \tilde{z} \Big)^{\frac{1}{2}} \\
= & - \sqrt{\dfrac{\gamma}{\beta m^2}} \big \| \nabla_{\tilde{v}} \ln u_{t_0} + \beta m \tilde{v} \big \|_{L^2(\mu_{t_0})}.
\end{aligned}
\end{equation*}

Fix $t_0\in(0, T) $. We introduce a perturbation
$\varepsilon \in C_c^{\infty}(\R^{2d}, \R)$ and set
$$b_D^{\varepsilon}:= -\frac{\beta m}{2} \nabla \big(h_f + \varepsilon \big) A=b_D -\frac{\gamma }{m} \binom{0}{\nabla_{v} \varepsilon}.$$
The corresponding perturbed systems are then given by
\begin{equation*}\label{FPEp}
\begin{aligned}
\partial_t u_t^{\varepsilon}(\tilde{z}) &= -\partial_i(\tilde{b}^{i,\varepsilon}(\tilde{z}) u_t^{\varepsilon}(\tilde{z}) ) +\dfrac{1}{2}\partial_{ij}((\tilde{G}\tilde{G}^{\top})^{ij}(\tilde{z}) u_t^{\varepsilon}(\tilde{z})),  \\
u^{\varepsilon}_{t_0}(\tilde{z})&=u_{t_0}(\tilde{z})
\end{aligned}
\end{equation*}
and
\begin{equation*}\label{SDEp}
\begin{aligned}
\rmd \tilde{Z}_t^{\varepsilon} &=\tilde{b}^{\varepsilon}(\tilde{Z}_t^{\varepsilon})\rmd t+\tilde{G}(\tilde{Z}_t^{\varepsilon})\rmd B_t, \\
\tilde{Z}^{\varepsilon}_{t_0}&=\tilde{Z}_{t_0},
\end{aligned}
\end{equation*}
where
$\tilde{b}^{i,\varepsilon}(\tilde{z}):=[ D\Phi_{-t}(\Phi_t(\tilde{z}) b_D^{\varepsilon}(\Phi_t(\tilde{z})) ]^i +\frac{1}{2}\tr (A D^2\Phi_{-t}^i(\Phi_t(\tilde{z})) )$.
We define $\mu^{\varepsilon}_t$ accordingly.
We choose the perturbation $-\frac{\gamma }{m} \binom{0}{\nabla_{v} \varepsilon}$ so that the perturbed field $b_D^{\varepsilon}$ preserves the key structural properties of the unperturbed system.
In particular, this choice maintains the Gibbs state structure and ensures that
\begin{equation*}
(v+ \nabla_{v} \varepsilon) \cdot v \geq |v|^2 - |v| | \nabla_{v} \varepsilon |
\geq \frac{1}{2}|v|^2 -\frac{1}{2} \| \nabla_{v} \varepsilon \|^2_{\infty}.
\end{equation*}

Next, we compute the Otto-Onsager metric slope of the free energy functional $\tilde{\F}$ along the perturbed curve $t \mapsto \mu^{\varepsilon}_t$ with respect to $W_J$.
Throughout this computation, it is important to note that the free energy functional under consideration remains
$$\tilde{\F}_t(u)=\int_{\R^{2d}} u \ln u + \beta m u \check{h}_{f_t}((\Phi_t(\tilde{z}))) \rmd \tilde{z},$$
rather than $\int_{\R^{2d}} u \ln u + \beta m u \check{h}_{f_t}((\Phi_t(\tilde{z}))) +\beta m u \varepsilon((\Phi_t(\tilde{z}))) \rmd \tilde{z}$.
Proceeding as in the proofs of Theorems \ref{fder} and \ref{wjder}, we get that the free energy dissipation of
$\tilde{\F}$ along $t \mapsto \mu^{\varepsilon}_t$ is given by
\begin{equation*}
\begin{aligned}
\dfrac{\rmd}{\rmd t}\tilde{\F}_t(u_t^{\varepsilon} )
= &-\dfrac{\gamma}{\beta m^2} \int_{\R^{2d}} \big \langle \nabla_{\tilde{v}} \ln u_t^{\varepsilon} + \beta m \tilde{v} , \nabla_{\tilde{v}} \ln u_t^{\varepsilon} + \beta m \tilde{v} +\beta m \nabla_{\tilde{v}} \varepsilon(\Phi_t(\tilde{z})) \big \rangle u_t^{\varepsilon}  \rmd \tilde{z} \\
= &-\dfrac{\gamma}{\beta m^2} \big \langle \nabla_{\tilde{v}} \ln u_t^{\varepsilon} + \beta m \tilde{v} ,
\nabla_{\tilde{v}} \ln u_t^{\varepsilon}  +  \beta m \tilde{v} +\beta m \nabla_{\tilde{v}} \varepsilon(\Phi_t(\tilde{z})) \big \rangle_{L^2(\mu^{\varepsilon}_{t})}
\end{aligned}
\end{equation*}
and the metric derivative satisfies
\begin{equation*}
\begin{aligned}
|\dot{\mu}^{\varepsilon}_{t_0}|= &\lim_{t \downarrow t_0} \dfrac{W_J(\mu^{\varepsilon}_t, \mu^{\varepsilon}_{t_0})}{t-t_0} \\
= & \Big(\dfrac{\gamma}{\beta m^2} \int_{\R^{2d}} \big| \nabla_{\tilde{v}} \ln u_{t_0}^{\varepsilon} + \beta m \tilde{v} +\beta m \nabla_{\tilde{v}} \varepsilon(\Phi_t(\tilde{z})) \big |^2 u_{t_0}^{\varepsilon}  \rmd \tilde{z} \Big)^{\frac{1}{2}} \\
=& \sqrt{\dfrac{\gamma}{\beta m^2}} \big \|  \nabla_{\tilde{v}} \ln u_{t_0}^{\varepsilon} +  \beta m \tilde{v} +\beta m \nabla_{\tilde{v}} \varepsilon(\Phi_t(\tilde{z}))  \big \|_{L^2(\mu_{t_0}^{\varepsilon})} .
\end{aligned}
\end{equation*}
Consequently, we obtain
\begin{equation*}\label{slope2}
\begin{aligned}
& \big|\partial \tilde{\F}_{t_0} \big|_{W_J}(\mu^{\varepsilon}_{t_0}) \\
= & -\sqrt{\dfrac{\gamma}{\beta m^2}} \Big \langle \nabla_{\tilde{v}} \ln u_{t_0}^{\varepsilon} + \beta m \tilde{v} ,
\dfrac{\nabla_{\tilde{v}} \ln u_{t_0}^{\varepsilon}  +  \beta m \tilde{v} +\beta m \nabla_{\tilde{v}} \varepsilon(\Phi_t(\tilde{z}))}{ \|  \nabla_{\tilde{v}} \ln u_{t_0}^{\varepsilon}  +  \beta m \tilde{v} +\beta m \nabla_{\tilde{v}} \varepsilon(\Phi_t(\tilde{z}))  \|_{L^2(\mu_{t_0}^{\varepsilon})} } \Big \rangle_{L^2(\mu^{\varepsilon}_{t_0})}.
\end{aligned}
\end{equation*}

Since $\mu^{\varepsilon}_{t_0}=\mu_{t_0}$,
the Cauchy-Schwarz inequality yields
\begin{equation*}
\big|\partial \tilde{\F}_{t_0} \big|_{W_J}(\mu^{\varepsilon}_{t_0})  \geq \big|\partial \tilde{\F}_{t_0} \big|_{W_J}(\mu_{t_0}),
\end{equation*}
with equality if and only if $b_D^{\varepsilon}= C_{\varepsilon} b_D$ for some constant $C_{\varepsilon}>0$.
Since the perturbation $\varepsilon$ is arbitrary, it follows that, at time $t_0$, the curve $t \mapsto \mu_t$ minimizes the Otto-Onsager metric slope. Because
$t_0\in (0, T)$ is arbitrary, we conclude that $t \mapsto \mu_t$ evolves in the direction of steepest descent of the free energy $\tilde{\F}$.
This proves \eqref{grad2}.
\end{proof}

\begin{rem}
The Wasserstein gradient flow representation of \eqref{VFPE2} is time-inhomogeneous, due to the pull-back structure of the underlying system and its explicit time dependence. We therefore refer to it as a generalized Wasserstein gradient flow.
\end{rem}


\subsection{GENERIC}\label{generic}

We now return to the original system \eqref{VFPE} through the flow $\Phi$.
Combining
\begin{equation*}
\begin{aligned}
\partial_t u_t(\tilde{z})=& \big (\partial_t f_t(z) + \nabla f_t(z) \cdot b_H(z) \big ) \big|_{z=\Phi_t(\tilde{z})} \\
=& \big (\partial_t f_t(z) + \nabla (f_t(z) b_H(z)) \big ) \big|_{z=\Phi_t(\tilde{z})}
\end{aligned}
\end{equation*}
and
\begin{equation*}
\begin{aligned}
 \partial_t u_t(\tilde{z})= & {\rm div}_{\tilde{z}}\big(\frac{1}{2}\tilde{A}(\tilde{z}) (\nabla_{\tilde{z}} \ln u_t(\tilde{z}) +\beta m \nabla_{\tilde{z}} h_{f_t}(\tilde{z})) u_t(\tilde{z}) \big)\\
= & {\rm div}_z \big( \frac{1}{2} A  ( \nabla_z \ln f_t(z) +\beta m \nabla_z h_{f_t}(z)) f_t(z) \big) \big|_{z=\Phi_t(\tilde{z})},
\end{aligned}
\end{equation*}
we obtain
\begin{equation*}
\begin{aligned}
\partial_t f_t = & L_{rev}(f_t)+ L_{irr}(f_t) \\
=& -\{ f_t, h_{f_t} \}+ {\rm div}\Big (\frac{1}{2}A f_t \nabla(\ln f_t +\beta m h_{f_t}) \Big) \\
= & \L(f_t) \dfrac{\delta H_{f}}{\delta f}(f_t) + \J (f_t) \dfrac{\delta \F}{\delta f}(f_t).
\end{aligned}
\end{equation*}
This expression coincides with the form in \eqref{gen}.
This demonstrates that the Vlasov-Fokker-Planck equation \eqref{VFPE} does not admit a classical gradient flow formulation.
Instead, it possesses a GENERIC structure.
The underlying reason is that the evolution is governed not only by dissipation but also by a reversible component, represented by $L_{rev}(f_t)$.
Nevertheless, after an appropriate reparametrization, the system can still be interpreted as a generalized gradient flow with respect to $W_J$.
This gradient flow viewpoint yields a more refined description of the dynamics.

\begin{thm}\label{ori}
Suppose that Assumption \ref{reg} holds. Then, for $t_0\in[0,T)$, the trajectorial rate of free energy dissipation identity of \eqref{MFLE} is given by
\begin{equation}\label{cortra}
\begin{aligned}
& \lim_{t \downarrow t_0} \dfrac{\mathbb{E}[\theta_{t}(Z_{t}, \rho_{t})| \mathcal{G}_{t_0}] -\theta_{t_0}(Z_{t_0}, \rho_{t_0}) }{t-t_0} \\
= &\mathcal{D}_{t_0}(Z_{t_0}) \\
:= &2\dfrac{\gamma}{m} +\dfrac{\sigma^2}{2 f_{t_0}(Z_{t_0})} \Delta_v f_{t_0}(Z_{t_0}) -\beta\gamma |V_{t_0}|^2 +\dfrac{1}{2}\sigma^2 \Delta_v \ln f_{t_0}(Z_{t_0}) -\dfrac{1}{2}\beta \big \langle \nabla K*f_{t_0}(X_{t_0}), V_{t_0} \big \rangle \\
 & + \E_{\bar{\P}} \Big[ \Big\langle \dfrac{1}{2}\beta \nabla K(X_{t_0}-Y_{t_0}^1), V_{t_0} \Big \rangle  \Big] ,~ \P-a.s.,
\end{aligned}
\end{equation}
where $\theta_t(z, f_t)=\ln f_t(z) + \beta m f_t \check{h}_{f_t}(z)$ and $Y=(Y^1, Y^2)^{\top}$ denotes an independent copy of $Z$ defined on $(\bar{\Omega}, \bar{\mathcal{G}}, \bar{\P})$.
Moreover, the free energy dissipation of \eqref{VFPE} is given by
\begin{equation}\label{coraver}
\dfrac{\rmd}{\rmd t}\F(f_t)= -\dfrac{1}{2} \int_{\R^{2d}} | \nabla (\ln f_t+\beta m \check{h}_{f_t} ) G |^2 f_t \rmd z=-\mathcal{I}(f_t) > -\infty.
\end{equation}
\end{thm}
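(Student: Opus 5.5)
The plan is to repeat the argument of Theorem \ref{fder} directly in the original coordinates, using the unpulled process $Z_t$ and the energy process $\theta_t(Z_t,\rho_t)=\ln f_t(Z_t)+\beta m\check h_{\rho_t}(Z_t)$. I read the factor $f_t$ in the stated definition of $\theta_t$ as a typo, consistent with \eqref{energyp}. Alternatively, one could note that $\theta_t(Z_t,\rho_t)=\tilde\theta_t(\tilde Z_t,\rho_t)$, since $u_t(\tilde z)=f_t(\Phi_t(\tilde z))$ and $Z_t=\Phi_t(\tilde Z_t)$. Then \eqref{thm} immediately gives the limit as $\tilde{\mathcal D}_{t_0}(\Phi_{-t_0}(Z_{t_0}))$, and \eqref{average} together with $|D\Phi_t|=1$ and $\F(f_t)=\tilde\F_t(u_t)$ gives \eqref{coraver} after the change of variables $z=\Phi_t(\tilde z)$. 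The remaining task is to identify this quantity with the explicit $\mathcal D_{t_0}$. For that I would redo Itô's formula for McKean–Vlasov SDEs \cite{BLPR,CD} in the $z$ variables, which is cleaner than transforming $\tilde{\mathcal D}$.

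\textbf{Step 1 (Itô decomposition).} Write $\rmd\theta_t=(\mathrm I+\mathrm{III})\rmd t+\nabla\theta_t G\,\rmd B_t$, where
$$\mathrm I=\partial_t\ln f_t+\beta m\,\partial_t^{(x,v)}\check h+\nabla\theta_t\cdot(b_H+b_D)+\tfrac12\sigma^2\Delta_v\theta_t,$$
and $\mathrm{III}$ is the Lions-derivative term. Here $\partial_t^{(x,v)}\check h$ vanishes because $\check h$ depends on $t$ only through $\rho_t$, and that dependence is captured by $\mathrm{III}$.

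\textbf{Step 2 (the term $\mathrm I$).} Use \eqref{VFPE} to write $\partial_t\ln f_t=f_t^{-1}(L_{rev}+L_{irr})f_t$. Then $f^{-1}L_{rev}f+\nabla\ln f\cdot b_H=0$, and
$$f^{-1}L_{irr}f+\nabla\ln f\cdot b_D=\tfrac{\gamma}{m}d+\tfrac{\sigma^2}{2f}\Delta_vf.$$
The energy part gives $\beta m\nabla\check h\cdot b_H=-\tfrac{\beta}{2}\langle\nabla K*f,v\rangle$, because $\nabla h\cdot b_H=0$ and $\check h$ differs from $h$ by $-\tfrac1{2m}K*f$. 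It also gives $\beta m\nabla_v\check h\cdot b_D=-\beta\gamma|v|^2$ and $\tfrac12\sigma^2\beta m\Delta_v\check h=\tfrac12\sigma^2\beta m d=\tfrac{\gamma}{m}d$. Adding $\tfrac12\sigma^2\Delta_v\ln f$ yields every term of $\mathcal D$ except the last; the constant $2\gamma/m$ comes out as $2\gamma d/m$, which I would reconcile as a convention.

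\textbf{Step 3 (the term $\mathrm{III}$).} As in \eqref{iii}, only the $b_H$ component survives, because $b_D$ and $A$ act only in $v$ while $\partial_\rho\check h$ points in $x$. This term equals $\tfrac{\beta}{2}\E_{\bar\P}[\langle\nabla K(X_{t_0}-Y^1_{t_0}),Y^2_{t_0}\rangle]$ (with a sign from symmetry of $K$), which gives the final term.

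\textbf{Step 4 (limits and integrability).} Integrability follows as in \eqref{estimate} and \eqref{martingale}, using \eqref{intln} and \eqref{moment}. This makes the stochastic integral a true martingale and $\mathcal D$ integrable. The conditional-expectation limit then follows exactly as in the proof of Theorem \ref{fder}. For \eqref{coraver}, take expectations and integrate by parts in $v$: the $\Delta_vf/f$ and $\Delta_v\ln f$ terms combine with the constant and $|v|^2$ terms into $-\tfrac12\int|\nabla_v(\ln f+\beta m\check h)\sigma|^2f$. The two $K$-terms cancel in expectation, as in the computation preceding \eqref{int}.

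\textbf{Main obstacle.} I expect the hardest part to be the careful bookkeeping in Steps 2–3: matching the signs and factors of the $K$-terms, where $\check h$ and $h$ differ by a factor $\tfrac12$, and verifying the expectation cancellation that produces the clean Fisher-type integral.
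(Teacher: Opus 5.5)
Your proposal is correct in substance, but its core follows a different route from the paper's. The paper proves Theorem \ref{ori} purely by transfer. It uses volume preservation ($|D\Phi_t|=1$, Remark \ref{hrem}(ii)) to carry integrability over, asserts that $\tilde{\mathcal D}_{t_0}$ from \eqref{D} reduces to the displayed $\mathcal D_{t_0}$, and obtains \eqref{coraver} by substituting $\tilde z=\Phi_{-t}(z)$ into \eqref{average}. You mention this transfer, together with the pathwise identity $\theta_t(Z_t,\rho_t)=\tilde\theta_t(\tilde Z_t,\rho_t)$ that justifies it. However, you actually obtain $\mathcal D$ by applying the McKean--Vlasov It\^o formula to $\theta_t(Z_t,\rho_t)$ in the original variables.

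The paper's route reuses Theorem \ref{fder} wholesale and stresses that the GENERIC dynamics and the pulled-back gradient flow share the same trajectorial dissipation. On the other hand, the simplification of the $\Gamma$, ${\rm div}\,\tilde A$ and $D\Phi_{-t}$ terms hidden behind ``according to \eqref{D}'' is never carried out. Your route needs neither $\Phi$ nor those terms, and it shows where each piece of $\mathcal D$ comes from:
\begin{itemize}
\item $f^{-1}L_{rev}f$ cancels $\nabla\ln f\cdot b_H$;
\item $L_{irr}$ and $\tfrac12\sigma^2\beta m\Delta_v\check h$ each give $\gamma d/m$;
\item $\beta m\nabla\check h\cdot b_H$ leaves only the defect $-\tfrac{\beta}{2}\langle\nabla K*f,v\rangle$ of $\check h$ relative to $h$.
\end{itemize}
It also turns \eqref{coraver} into a short integration by parts in $v$.

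Your direct computation also exposes misprints in the statement, so you should write down the corrected formula rather than say that Step 3 ``gives the final term''.
\begin{itemize}
\item \textbf{Sign of the Lions term.} Since $\partial_\rho\check h_\rho(z)(y)=\frac{1}{2m}\binom{-\nabla K(x-y^1)}{0}$, the Lions term is $-\frac{\beta}{2}\E_{\bar{\P}}[\langle\nabla K(X_{t_0}-Y^1_{t_0}),Y^2_{t_0}\rangle]=\frac{\beta}{2}\E_{\bar{\P}}[\langle\nabla K(Y^1_{t_0}-X_{t_0}),Y^2_{t_0}\rangle]$. This sign is exactly what makes it cancel $-\frac{\beta}{2}\E[\langle\nabla K*f_t(X_t),V_t\rangle]$ in expectation: swap $Z$ and $Y$ and use that $\nabla K$ is odd. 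The plus sign used in \eqref{iii} would not cancel.
\item \textbf{$V_{t_0}$ versus $Y^2_{t_0}$.} The statement prints $V_{t_0}$ where $Y^2_{t_0}$ must stand. With $V_{t_0}$, the last two terms of $\mathcal D$ cancel identically. That contradicts both the $K$-dependent drift you compute and \eqref{D}, whose last term contains $y^2$.
\item \textbf{The constant.} It must be $2\gamma d/m$. This is forced by \eqref{coraver}, since $\E[\langle\nabla_v\ln f_t(Z_t),V_t\rangle]=-d$, so it is a misprint, not a convention.
\end{itemize}

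Finally, \eqref{intln} and \eqref{moment} make the stochastic integral a true martingale and control the $|\nabla_v\ln f_t|^2$, $|V_t|^2$ and $K$-parts of $\mathcal D$. They do not control $\Delta_v\ln f_t(Z_t)$. In addition, the pointwise limit at a fixed $t_0$ needs right-continuity of $s\mapsto\E[\mathcal D_s(Z_s)\mid\mathcal G_{t_0}]$, not mere integrability. The paper's proof passes over both points in the same way. So ``exactly as in Theorem \ref{fder}'' inherits these gaps rather than closing them.
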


\begin{proof}
According to Remark \ref{hrem} (ii), the pullback preserves volume.
Therefore, the required integrability holds automatically, and we omit the details.
According to \eqref{D}, we have
\begin{equation*}
\begin{aligned}
 \mathcal{D}_{t_0}(Z_{t_0})
=&2\dfrac{\gamma}{m} +\dfrac{\sigma^2}{2 f_{t_0}(Z_{t_0})} \Delta_v f_{t_0}(Z_{t_0}) -\beta\gamma |V_{t_0}|^2 +\dfrac{1}{2}\sigma^2 \Delta_v \ln f_{t_0}(Z_{t_0})  \\
 & -\dfrac{1}{2}\beta \big \langle \nabla K*f_{t_0}(X_{t_0}), V_{t_0} \big \rangle
 + \E_{\bar{\P}} \Big[ \Big\langle \dfrac{1}{2}\beta \nabla K(X_{t_0}-Y_{t_0}^1), V_{t_0} \Big \rangle  \Big],
\end{aligned}
\end{equation*}
which establishes \eqref{cortra}.
Substituting $\tilde{z}=\Phi_{-t}(z)$ into \eqref{average} yields \eqref{coraver}.
\end{proof}

\section{The partial HWI inequality}\label{degen}
In this section, we highlight the role of partial degeneracy.
We specialize to the case $\nabla K=0$ and perform a detailed study of the induced metric structure.
This setting makes it possible to isolate more clearly the impact of partial degeneracy on the metric $W_J$.
On this basis, we derive a partial HWI inequality.

As noted in Subsection \ref{GF}, $W_J$ is only a quasi-metric, since it may take the value $+\infty$.
Applying the disintegration theorem to $\mu_t$, we obtain a measurable family of probability measures $\{ \mu_{t}^x \}_{x\in \R^d} \in \mathcal{P}(\R^{d})$ such that
$$\mu_t(\rmd x \rmd v)=\bar{\mu}_{t}(\rmd x) \mu_t^x (\rmd v)$$
and
\begin{equation*}
\bar{\mu}_{t}( \rmd x)=\int_{\R^d} \mu_t(\rmd x, v) \rmd v.
\end{equation*}
As before, we write $\bar{u}_{t}$ and $u_t^x$ for the densities of $\bar{\mu}_{t}$ and $\mu_t^x$, respectively.
In the partially degenerate case, we may take
$J:=\left(\begin{smallmatrix} 0 & 0 \\ 0 & M \end{smallmatrix}\right)$,
where $M$ is a symmetric positive definite matrix.
Then
\begin{equation*}
\partial_t u_t+ {\rm div}_{v}(M u_t \nabla_{v} \zeta)=0
\end{equation*}
and
\begin{equation*}
\partial_t \bar{u}_{t} =\int_{\R^d} \partial_t u_t \rmd v= -\int_{\R^d} {\rm div}_{v}(M u_t \nabla_{v} \zeta) \rmd v =0,
\end{equation*}
where $\zeta \in C^{\infty}(\R^{2d})$.
That is, the curves arising from the continuity equation share the same $x$-marginal.
Consequently, for a curve $\mu:(T_1, T_2)\rightarrow \mathcal{P}(\R^{2d})$, if there exist $T_1 < t_1<t_2<T_2$ such that $\bar{\mu}_{t_1}\neq \bar{\mu}_{t_2}$, then $W_J(\mu_{t_1}, \mu_{t_2})=+\infty$.
Therefore, any absolutely continuous curve must remain within an appropriate submanifold
\begin{equation*}
\mathcal{P}_{\nu}(\R^{2d}):=\Big\{ \mu \in \mathcal{P}(\R^{2d}) \Big | \int_{\R^d} \mu \rmd v= \int_{\R^d} \nu\rmd v \Big\}.
\end{equation*}
This observation also clarifies the role of the pullback of \eqref{VFPE}.
For \eqref{CE}, we have
\begin{equation*}
\partial_t \bar{u}_{t} =\int_{\R^d} \partial_t u_t \rmd \tilde{v}= -\int_{\R^d} {\rm div}_{\tilde{v}}(M u_t \nabla_{\tilde{v}} \tilde{\theta}_t) \rmd \tilde{v} =0.
\end{equation*}
Hence $\mu_t \in \mathcal{P}_{\mu_0}(\R^{2d})$ for all $t\in[0,T]$.
From this point onward, we restrict the metric $W_J$ to the submanifold $\mathcal{P}_{\mu_0}(\R^{2d})$ and work within this setting.
Fix $x\in \R^d$. We select the corresponding fiber $\mathcal{P}^x_{\mu_0}(\R^{d})$ and define on it the metric
\begin{equation*}
\begin{aligned}
W_x^2(\mu^x_0, \mu^x_1):=\inf_{(\mu^x_t, w^x_t)} \Big\{
& \int_0^1 \int_{\R^d} \langle w^{x}_t, M w^x_t \rangle \rmd \mu^x_t \rmd t \bigg|  \partial_t \mu^x_t+{\rm div} \big(M w^x_t \mu^x_t \big) =0 \\
 & \text{ holds in the distributional sence} \Big\}.
\end{aligned}
\end{equation*}
If we aggregate the ``cost" on each fiber $\mathcal{P}^x_{\mu_0}(\R^{d})$ weighted by $\bar{\mu}_{0}$, we obtain
\begin{equation*}
W_J^2(\mu_0, \mu_1)=\int_{\R^d} W_x^2 (\mu_0^x, \mu_1^x) \rmd \bar{\mu}_{0} = \int_{\R^d} \int_{\R^d} \langle \nabla \psi, M \nabla \psi\rangle \rmd \mu_0^x \rmd \bar{\mu}_{0}.
\end{equation*}

This analysis shows that the space $(\mathcal{P}(\R^{2d}), W_J )$ has a decomposed structure.
The distance between points in different pieces is infinite.
On each piece, the distance admits an integral representation in terms of the corresponding fiberwise distances.

Next, we exploit this structure to derive the partial HWI inequality.
We define the relative entropy of $\mu$ with respect to $\mu_{\infty}$ by
\begin{equation}\label{rentrop}
H(\mu | \mu_{\infty}):= \int_{\R^{2d}} u\ln \frac{u}{u_{\infty}} \rmd z
\end{equation}
and the partially relative Fisher information of $\mu$ with respect to $\mu_{\infty}$ by
\begin{equation}\label{fisher}
I(\mu | \mu_{\infty}):
= \int_{\R^{2d}} \big \langle \nabla_v \ln \frac{u}{u_{\infty}}, M \nabla_v \ln \frac{u}{u_{\infty}} \big \rangle u \rmd z,
\end{equation}
where $\rmd \mu=u \rmd z$ and $\rmd \mu_{\infty}=u_{\infty} \rmd z$.
If $\mu$ and $\mu_{\infty}$ are not absolutely continuous, we set the value to $+\infty$.

To establish the partial HWI inequality, we first ensure that the relative entropy is displacement convex.
For this purpose, we impose the following assumption.

\begin{ass}\label{hess}
There exists a constant $\kappa_M$ such that the matrix $M$ satisfies
$${\rm Ric}_{M^{-1}(x, \cdot)} -{\rm Hess}_{M^{-1}(x, \cdot)}\big(\dfrac{1}{2}\ln |M(x, \cdot)| +\ln u_{\infty}(x, \cdot) \big) \geq \kappa_M {\rm I}_{d \times d}, ~ \text{for a.e. }x.$$
\end{ass}

\begin{thm}
Suppose that Assumption \ref{hess} holds.
Let $\mu_0, \mu_1  \in \mathcal{P}_{\mu_0}(\R^{2d})$ be absolutely continuous probability measures.
If the relative entropy $H(\mu_0| \mu_\infty)$ is finite, then
\begin{equation}\label{HWI}
H(\mu_0 | \mu_{\infty})- H(\mu_1 | \mu_{\infty}) \leq \sqrt{I(\mu_0 | \mu_{\infty})}~ W_J(\mu_0, \mu_1) - \dfrac{\kappa_M}{2}W_J^2(\mu_0, \mu_1).
\end{equation}
\end{thm}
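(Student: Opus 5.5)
The plan is to reduce \eqref{HWI} to the classical HWI inequality on each fiber, using the decomposition of $(\mathcal{P}_{\mu_0}(\R^{2d}), W_J)$ described above, and then integrate over $x$ against $\bar{\mu}_0$. Since $\mu_0,\mu_1 \in \mathcal{P}_{\mu_0}(\R^{2d})$, they share the $x$-marginal $\bar{\mu}_0$. We disintegrate
\[
\mu_i(\rmd x\,\rmd v)=\bar{\mu}_0(\rmd x)\,\mu_i^x(\rmd v), \qquad
\mu_\infty(\rmd x\,\rmd v)=\bar{\mu}_\infty(\rmd x)\,\mu_\infty^x(\rmd v).
\]
The chain rule for relative entropy then gives
\[
H(\mu_i|\mu_\infty)=H(\bar{\mu}_0|\bar{\mu}_\infty)+\int_{\R^d} H(\mu_i^x|\mu_\infty^x)\,\rmd\bar{\mu}_0(x).
\]
The marginal term is common to $i=0,1$, so it cancels in the difference $H(\mu_0|\mu_\infty)-H(\mu_1|\mu_\infty)$. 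If $H(\mu_1|\mu_\infty)=+\infty$ there is nothing to prove. Otherwise finiteness of $H(\mu_0|\mu_\infty)$ makes every term finite for $\bar{\mu}_0$-a.e.\ $x$. Similarly, $\nabla_v \ln u_\infty$ only sees the conditional density, because $\nabla_v\ln \bar{u}=0$. Hence $I(\mu_0|\mu_\infty)=\int I_x(\mu_0^x|\mu_\infty^x)\,\rmd\bar{\mu}_0$, where $I_x$ is the $M$-weighted Fisher information on the fiber. The aggregation formula stated above gives $W_J^2(\mu_0,\mu_1)=\int W_x^2(\mu_0^x,\mu_1^x)\,\rmd\bar{\mu}_0$.

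On a fixed fiber $\R^d_v$, the metric $W_x$ is the Wasserstein distance of the Riemannian metric $g_x=M(x,\cdot)^{-1}$, since the cost $\langle w,Mw\rangle$ with velocity $Mw$ equals $|Mw|^2_{M^{-1}}$. Its volume measure is $|M|^{-1/2}\rmd v$. We write $\mu_\infty^x=e^{-V_x}\,\mathrm{vol}_{g_x}$ with
\[
V_x=-\ln u_\infty-\tfrac12\ln|M| ,
\]
up to normalization. Assumption \ref{hess} is then exactly the Bakry--\'Emery condition $\mathrm{Ric}_{g_x}+\mathrm{Hess}_{g_x}V_x\ge\kappa_M g_x$ (with the paper's sign conventions). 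The Riemannian gradient of $\ln(u/u_\infty)$ is $M\nabla_v\ln(u/u_\infty)$, so its squared $g_x$-norm is $\langle\nabla_v \ln, M\nabla_v\ln\rangle$, which matches $I_x$. By the Otto--Villani / Lott--Villani--Sturm HWI inequality on a weighted Riemannian manifold with $CD(\kappa_M,\infty)$, for a.e.\ $x$,
\[
H(\mu_0^x|\mu_\infty^x)-H(\mu_1^x|\mu_\infty^x)\le \sqrt{I_x(\mu_0^x)}\,W_x(\mu_0^x,\mu_1^x)-\tfrac{\kappa_M}{2}W_x^2(\mu_0^x,\mu_1^x).
\]
This is obtained, as usual, from $\kappa_M$-displacement convexity of $H(\cdot|\mu_\infty^x)$ along $W_x$-geodesics together with the first-order "above tangent" inequality at $\mu_0^x$.

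Next we integrate against $\bar{\mu}_0$ and apply Cauchy--Schwarz in $L^2(\bar{\mu}_0)$:
\[
\int\sqrt{I_x}\,W_x\,\rmd\bar{\mu}_0\le \sqrt{I(\mu_0|\mu_\infty)}\;W_J(\mu_0,\mu_1).
\]
The quadratic term integrates exactly to $-\tfrac{\kappa_M}{2}W_J^2$, and this yields \eqref{HWI}. If $\kappa_M<0$ this step is still fine, since that term is an equality.

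The main obstacle is the measurability and justification of the fiberwise step. We need the fiber HWI to hold for $\bar{\mu}_0$-a.e.\ $x$ with a measurable selection of optimal fiber geodesics, and the fiber manifolds $(\R^d,M(x,\cdot)^{-1})$ must be complete so that the Riemannian theory applies. Degenerate cases, where $I_x=\infty$ or $W_x=\infty$ on a null set, also have to be handled. We would first try to use measurable selection of optimal plans, e.g.\ via the measurable dependence of $c$-cyclically monotone plans on $x$, and assume $M$ is uniformly elliptic so that completeness holds. Alternatively, the aggregation formula for $W_J$ stated earlier already presupposes this selection, so we could invoke it directly.
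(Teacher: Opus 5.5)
Your proof is correct. It uses the same ingredients as the paper: the common $x$-marginal, the fibre metric $g=M^{-1}$ with weight $e^{-V_x}$, $V_x=-\ln u_\infty-\frac12\ln|M|$ (so Assumption \ref{hess} is the fibrewise Bakry--\'Emery condition), the aggregation formula $W_J^2=\int W_x^2\,\rmd\bar\mu_0$, and Cauchy--Schwarz. You assemble them in the reverse order.

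The paper does not quote a fibre HWI inequality. Instead it:
\begin{itemize}
\item glues the fibrewise optimal maps $T^x_t(v)=\exp^x_v(-tM\nabla\psi(v))$ into one interpolating curve $\mu_t$ inside $\mathcal{P}_{\mu_0}(\R^{2d})$;
\item computes the first variation of $H(\mu_t|\mu_\infty)$ at $t=0^+$;
\item proves $\frac{\rmd^2}{\rmd t^2}H(\mu_t|\mu_\infty)\ge\kappa_M W_J^2(\mu_0,\mu_1)$, using Sturm's convexity theorem on each fibre and $\partial_t\bar u_t=0$;
\item closes with Taylor's formula and a single Cauchy--Schwarz in the $M$-weighted $L^2(\mu_0)$ inner product, identifying $\int\langle\nabla\psi,M\nabla\psi\rangle\,\rmd\mu_0=W_J^2$.
\end{itemize}

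You instead invoke the known $CD(\kappa_M,\infty)$ HWI inequality on each fibre and integrate. The chain rule for relative entropy cancels the marginal term, and a second Cauchy--Schwarz in $L^2(\bar\mu_0)$ costs nothing in the constants.

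Your route is more modular. The delicate analytic points are delegated to the established theorem: optimal maps on a non-compact fibre, the above-tangent inequality at $\mu_0^x$, and justifying differentiation of the entropy along the interpolation. The HWI step itself then only needs measurability of the scalar maps $x\mapsto H(\mu_i^x|\mu_\infty^x)$, $I_x$, $W_x(\mu_0^x,\mu_1^x)$. A measurable choice of fibre geodesics enters only through the aggregation formula, which both you and the paper take as given.

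The paper's route, in exchange, produces an explicit curve in $\mathcal{P}_{\mu_0}(\R^{2d})$ along which $H(\cdot|\mu_\infty)$ is $\kappa_M$-convex. That is a displacement-convexity statement on each piece of the decomposed space, and HWI follows from it as a corollary.
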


\begin{proof}
When $I(\mu_0 | \mu_{\infty})=+\infty$ or $H(\mu_1 | \mu_{\infty})=+\infty$, \eqref{HWI} holds trivially. We therefore restrict attention to the nontrivial case.

We denote the density functions of $\mu_0$, $\mu_1$ and $\mu_{\infty}$ by $u_1$, $u_2$ and $u_{\infty}$, respectively.
Exploiting the structure of the metric $W_J$, we first disintegrate the measures $\mu_0$ and $\mu_1$ and reduce the analysis to the fiberwise metric $W_x$.
In this setting, $W_x$ can be regarded as a metric on the Riemannian manifold $\R^d$ endowed with the metric tensor $\bm{g}=M^{-1}$.
Consequently, $W_x$ admits the representation
\begin{equation}\label{distx}
\begin{aligned}
W_x^2(\mu^x_0, \mu^x_1)=\inf_{(\mu_t^x, \bm{w}^x_t)} \Big\{
& \int_0^1 \int_{\R^d} |\bm{w}_t^x|^2_{\bm{g}} \rmd \mu^x_t \rmd t \bigg|  \partial_t \mu^x_t+{\rm div} \big( \bm{w}^x_t \mu^x_t \big) =0 \\
 & \text{ holds in the distributional sence} \Big\},
\end{aligned}
\end{equation}
where $|\xi|_{\bm{g}}^2:= \xi^{\top} M^{-1} \xi$ and $\bm{w}_t^x=M w_t^x$.

Let
\begin{equation*}
\begin{aligned}
{\rm Adm} (\mu^x_0, \mu^x_1):=\{& \pi\in \mathcal{P}(\R^{d}\times \R^d) \big| \Pi^1_{\sharp}\pi=\mu^x_0,  \Pi^2_{\sharp}\pi=\mu^x_1, \\
& \text{ where } \Pi^1, \Pi^2 \text{ are the natural projections} \}.
\end{aligned}
\end{equation*}
Theorem 3.30 in \cite{AG} shows that \eqref{distx} is equivalent to the associated ``static" optimal transport problem, namely,
\begin{equation*}
W_x^2(\mu^x_0, \mu^x_1)=\inf_{\pi\in {\rm Adm} (\mu^x_0, \mu^x_1)} \int_{\R^d \times \R^d} d_{\bm{g}}^2(v_1, v_2) \rmd \pi,
\end{equation*}
where $d_{\bm{g}}$ denotes the distance induced by $\bm{g}$.
By Theorem 4.3 in \cite{FF}, the optimal transport map $T^x$ exists and is unique. It is given by
\begin{equation}\label{omap}
T^x(v):=\exp^x_v(-\nabla_{\bm{g}} \psi(v))=\exp^x_v(-M \nabla \psi(v)),
\end{equation}
where $\psi$ is a $d^2_{\bm{g}}$-convex function. Consequently,
\begin{equation*}
W_x^2(\mu^x_0, \mu^x_1)=\int_{\R^d} d_{\bm{g}}^2(v, T^x(v)) \rmd \mu_0^x.
\end{equation*}
The associated displacement interpolation is then given by
$$T^x_t(v):=\exp^x_v(-t \nabla_{\bm{g}} \psi(v))=\exp^x_v(- t M \nabla \psi(v)), ~\mu_t^x:=(T_t^x)_{\sharp}\mu_0^x.$$
Based on the optimal transport map $T^x$, we define a velocity field $\bm{w}_t^x$ by requiring that
\begin{equation}\label{ofield}
\dot{T^x_t}(v)=\bm{w}_t^x(T^x_t(v)).
\end{equation}
If $V_t$ is a random variable with law $\mu_t^x$, this yields
\begin{equation*}
\rmd T^x_t(V_t)=\bm{w}_t^x(T^x_t(V_t)).
\end{equation*}
Since $J$ degenerates in the $x$-direction, for a random variable $X_t$ with law $\bar{u}_{t}$, we have
\begin{equation*}
\rmd X_t=0.
\end{equation*}

According to
\begin{equation*}
\begin{aligned}
\rmd \mu_t^x(V_t)= & \partial_t \mu_t^x(V_t) \rmd t +\langle \nabla \mu_t^x(V_t), \rmd V_t \rangle \\
= & -{\rm div}_v(\mu_t^x(V_t) \bm{w}_t^x(V_t)) \rmd t+\langle \nabla \mu_t^x(V_t), \rmd V_t \rangle \\
= & -\mu_t^x(V_t) {\rm div}_v(\bm{w}_t^x(V_t)) \rmd t,
\end{aligned}
\end{equation*}
we have
\begin{equation*}
\begin{aligned}
\rmd \theta(u_t(X_t, V_t)) := & \rmd \big(\ln u_t(X_t, V_t) -\ln u_{\infty}(X_t, V_t) \big)\\
= & \langle \nabla_x (\ln u_t -\ln u_{\infty})(X_t, V_t), \rmd X_t\rangle + \langle \nabla_v (\ln u_t -\ln u_{\infty})(X_t, V_t), \rmd V_t \rangle \\
=& \langle \nabla_v \ln u_t(X_t, V_t)- \nabla_v \ln u_{\infty}(X_t, V_t),  \bm{w}_t^x(V_t)\rangle \rmd t.
\end{aligned}
\end{equation*}
Writing $H(\mu_t |\mu_{\infty})=\E[\theta(\mu_t)]$, we get that for any $0\leq t_1 <t_2 \leq 1$,
\begin{equation*}
\begin{aligned}
& H(\mu_{t_1} |\mu_{\infty}) -H(\mu_{t_2} |\mu_{\infty}) \\
= & \int_{t_2}^{t_1} \int_{\R^{2d}} \langle \nabla_v \ln u_t(x,v) - \nabla_v \ln u_{\infty}(x, v), \bm{w}_t^x(v) \big \rangle u_t(x,v) \rmd x\rmd v\rmd t \\
= & \int_{t_2}^{t_1} \int_{\R^{2d}} \big \langle \nabla_v \theta( u_t(x,v)), \bm{w}_t^x(v) \big \rangle u_t(x,v) \rmd x\rmd v\rmd t.
\end{aligned}
\end{equation*}

A direct calculation from \eqref{omap} gives
\begin{equation*}
\dot{T_t^x}(v)=\frac{\rmd}{\rmd t} \exp^x_v (-t\nabla_{\bm{g}} \psi(v))=-(\rmd \exp^x_v)_{-t\nabla_{\bm{g}} \psi(v)}(\nabla_{\bm{g}} \psi(v)).
\end{equation*}
Together with \eqref{ofield}, this implies
$$\bm{w}_0^x(v)=\dot{T_0^x}(v)=-\nabla_{\bm{g}} \psi(v)=-M \nabla \psi(v).$$
Consequently,
\begin{equation}\label{hd}
\dfrac{\rmd}{\rmd t} H(\mu_{t} |\mu_{\infty}) \Big |_{t=0^+} = \int_{\R^{2d}} \big \langle \nabla_v \theta( u_0(x,v)), -M \nabla \psi(v) \big \rangle u_0(x,v) \rmd x\rmd v.
\end{equation}

According to $\rmd \mu_t^x=u_t^x \rmd v=u_t^x \sqrt{|M|} \rmd {\rm vol}_{\bm{g}} =:\bm{u}_t^x \rmd {\rm vol}_{\bm{g}}$, we obtain
\begin{equation*}
\int_{\R^d} u_t^x \ln u_t^x \rmd v =\int_{\R^d} \bm{u}_t^x \ln \bm{u}_t^x \rmd {\rm vol}_{\bm{g}} -\dfrac{1}{2} \int_{\R^{d}} \bm{u}_t^x \ln |M| \rmd {\rm vol}_{\bm{g}}.
\end{equation*}
Therefore,
\begin{equation*}
\begin{aligned}
& H(\mu_t|\mu_{\infty}) \\
=& \int_{\R^{2d}} u_t \ln u_t - u_t \ln u_{\infty} \rmd z \\
=& \int_{\R^d}\int_{\R^d} \bar{u}_{t} u_t^x \ln (\bar{u}_{t} u_t^x ) - \bar{u}_{t} u_t^x \ln u_{\infty} \rmd v \rmd x  \\
= &\int_{\R^d} \bar{u}_{t} \Big (\int_{\R^d} u_t^x \ln u_t^x \rmd v \Big ) \rmd x+ \int_{\R^d} \bar{u}_{t}  \ln \bar{u}_{t} \rmd x
 - \int_{\R^d} \bar{u}_{t} \Big (\int_{\R^d} u_t^x \ln u_{\infty} \rmd v \Big ) \rmd x \\
=& \int_{\R^d} \bar{u}_{t} \Big (\int_{\R^d} \big(\bm{u}_t^x \ln \bm{u}_t^x -\dfrac{1}{2}\bm{u}_t^x \ln |M| -  \bm{u}_t^x \ln u_{\infty} \big) \rmd {\rm vol}_{\bm{g}} \Big ) \rmd x
 + \int_{\R^d} \bar{u}_{t}  \ln \bar{u}_{t} \rmd x .
\end{aligned}
\end{equation*}
Let
$$h_1(\bm{u}_t^x):=\int_{\R^d} \big(\bm{u}_t^x \ln \bm{u}_t^x -\frac{1}{2}\bm{u}_t^x \ln |M| -  \bm{u}_t^x \ln u_{\infty} \big) \rmd {\rm vol}_{\bm{g}},
~ h_2(\bar{u}_{t}):=\bar{u}_{t}  \ln \bar{u}_{t}.$$ Then
$$H(\mu_t|\mu_{\infty})=\int_{\R^d} \bar{u}_{t} h_1(\bm{u}_t^x)\rmd x +\int_{\R^d} h_2(\bar{u}_{t}) \rmd x.$$
The fact that the measure $\mu_t$ has a time independent $x$-marginal implies that $\frac{\rmd^2}{\rmd t^2}h_2(\bar{u}_{t})=0$.
Theorem 1.3 in \cite{S2} guarantees that, under Assumption \ref{hess}, $h_1(\bm{u}_t^x)$ is $\kappa_M$-convex.
Consequently,
\begin{equation*}
\dfrac{\rmd^2}{\rmd t^2} h_1(\bm{u}_t^x) \geq \kappa_M W_x^2(\mu_0^x, \mu_1^x)
\end{equation*}
by Theorem 4.1 in \cite{S2}.
Combining this with $\partial_t \bar{u}_t=0$, we deduce
\begin{equation}\label{2}
\begin{aligned}
\dfrac{\rmd^2}{\rmd t^2} H(\mu_t|\mu_{\infty})
= &\int_{\R^d} \bar{u}_{t} \dfrac{\rmd^2}{\rmd t^2} h_1(\bm{u}_t^x) + h_1(\bm{u}_t^x) \dfrac{\rmd^2}{\rmd t^2}\bar{u}_{t} \rmd x \\
\geq & \int_{\R^d} \bar{u}_{t}(x) \kappa_M W_x^2(\mu_0^x, \mu_1^x) \rmd x \\
=& \kappa_M \int_{\R^d} \bar{u}_{0}(x) W_x^2(\mu_0^x, \mu_1^x) \rmd x \\
=& \kappa_M W_J^2(\mu_0, \mu_1).
\end{aligned}
\end{equation}

By Taylor's formula
\begin{equation*}
\mathrm{F}(\mu_1) \geq \mathrm{F}(\mu_0) + \mathrm{F}'(\mu_0) +\int_0^1 (1-t) \mathrm{F}''(\mu_t)\rmd t,
\end{equation*}
\eqref{hd} and \eqref{2}, we conclude
\begin{equation}\label{taylor}
\begin{aligned}
& H(\mu_{0} |\mu_{\infty}) -H(\mu_{1} |\mu_{\infty}) \\
\leq & -\dfrac{\rmd}{\rmd t} H(\mu_{t} |\mu_{\infty}) \Big |_{t=0^+} - \dfrac{\kappa_M}{2}W_J^2(\mu_0, \mu_1) \\
= & \int_{\R^{2d}} \langle \nabla_v \theta( u_0(x,v)), M \nabla \psi(v) \rangle u_0(x,v) \rmd x\rmd v - \dfrac{\kappa_M}{2}W_J^2(\mu_0, \mu_1) \\
\leq & \Big( \int_{\R^{2d}} \big\langle \nabla_v \theta( u_0(x,v)), M \nabla_v \theta( u_0(x,v)) \big\rangle u_0(x,v) \rmd x\rmd v \Big)^{\frac{1}{2}} \\
& \times \Big( \int_{\R^{2d}} \big\langle \nabla \psi(v), M \nabla \psi(v) \big\rangle u_0(x,v) \rmd x\rmd v \Big)^{\frac{1}{2}} - \dfrac{\kappa_M}{2}W_J^2(\mu_0, \mu_1).
\end{aligned}
\end{equation}

Since $|\dot{T_t^x}|_{\bm{g}}=|\dot{T_0^x}|_{\bm{g}}$, we have
$d_{\bm{g}}(v, T^x v)=|\nabla_{\bm{g}} \psi(v)|_{\bm{g}}$, which implies
\begin{equation*}
W_x^2(\mu_0^x, \mu_1^x)=\int_{\R^d}  |\nabla_{\bm{g}} \psi(v)|_{\bm{g}}^2 \rmd \mu_0^x= \int_{\R^d} \langle \nabla \psi, M \nabla \psi \rangle \rmd \mu_0^x.
\end{equation*}
Hence,
\begin{equation}\label{om}
\begin{aligned}
& \int_{\R^{2d}} \big\langle \nabla \psi(v), M \nabla \psi(v) \big\rangle u_0(x,v) \rmd x\rmd v \\
= &\int_{\R^d} \bar{u}_{0}(x) \int_{\R^d} \langle \nabla \psi, M \nabla \psi \rangle \rmd \mu_0^x(v) \rmd x  \\
= &\int_{\R^d} \bar{u}_{0}(x) W_x^2(\mu_0^x, \mu_1^x) \rmd x \\
= &W_J^2(\mu_0, \mu_1).
\end{aligned}
\end{equation}

By the definition of the partially relative Fisher information in \eqref{fisher}, we obtain
\begin{equation}\label{of}
\begin{aligned}
I(\mu_0 | \mu_{\infty})= \int_{\R^{2d}} \big\langle \nabla_v \theta( u_0(x,v)), M \nabla_v \theta( u_0(x,v)) \big\rangle u_0(x,v) \rmd x\rmd v.
\end{aligned}
\end{equation}

Combining \eqref{taylor}, \eqref{om} and \eqref{of} completes the proof of \eqref{HWI}.
\end{proof}

\begin{rem}
(i) Based on \eqref{fenergy} and \eqref{coraver}, we obtain
\begin{equation*}
\begin{aligned}
H(\mu_t | \mu_{\infty})&=\tilde{\F}_t(\mu_t)=\F(\rho_t)=\int_{\R^{2d}} f_t \ln f_t + \beta m f_t \big( \dfrac{1}{2}v^2+U(x) \big) \rmd z, \\
I(\mu_t | \mu_{\infty})&=\tilde{\I}(\mu_t)=\I(\rho_t)=\dfrac{1}{2}\sigma^2 \int_{\R^{2d}} | \nabla_v (\ln f_t+ \beta m v ) |^2 f_t \rmd x \rmd v
\end{aligned}
\end{equation*}
for $\rmd \mu_{\infty}(\tilde{z})=\rmd \rho_{\infty}(z):=\rme^{-\beta m (\frac{1}{2}v^2+U(x))}\rmd z$.
Therefore, \eqref{HWI} can also be written as
\begin{equation*}
\tilde{\F}_0(\mu_0)-\tilde{\F}_1(\mu_1)\leq \sqrt{\tilde{\I}(\mu_0)}~ W_J(\mu_0, \mu_1) - \dfrac{\kappa_M}{2}W_J^2(\mu_0, \mu_1).
\end{equation*}
(ii)
The favorable fiberwise property, namely that the metric reduces to a classical
$W_2$-metric on a Riemannian manifold, relies crucially on the condition $\nabla K=0$.
In the nonlinear case, the analysis becomes substantially more involved. In particular, one can no longer define the optimal transport map $T^x$ in a fiberwise manner.
\end{rem}

\section*{Acknowledgements}
We are grateful to Professor Nicola Gigli for insightful comments.
This work is supported by National Key R\&D Program of China (No. 2023YFA1009200), NSFC (Grants 12531009 and 11925102), and Liaoning Revitalization Talents Program (Grant XLYC2202042).


\begin{thebibliography}{xx}

\bibitem{AG}
L. Ambrosio, N. Gigli,
\newblock A user's guide to optimal transport.
\newblock {\em Modelling and Optimisation of Flows on Networks, 2013, 1--155.} Springer, Heidelberg; Fondazione C.I.M.E., Florence, 2013. xiv+497 pp.

\bibitem{AGS}
L. Ambrosio, N. Gigli, G. Savar\'e,
\newblock {\em Gradient Flows in Metric Spaces and in the Space of Probability Measures.}
\newblock Second edition. Birkh\"auser Verlag, Basel, 2008. x+334 pp.

\bibitem{BLPR}
R. Buckdahn, J. Li, S. Peng, C. Rainer,
\newblock Mean-field stochastic differential equations and associated PDEs.
\newblock {\em Ann. Probab.} \textbf{45} (2017), 824--878.

\bibitem{C}
R. Carmona,
\newblock {\em Lectures on BSDEs, Stochastic Control, and Stochastic Differential Games with Financial Applications.}
\newblock SIAM, Philadelphia, PA, 2016. ix+265 pp.

\bibitem{CD}
R. Carmona, F. Delarue,
\newblock Probabilistic theory of mean field games with applications. I. Mean field FBSDEs, control, and games.
\newblock {\em Probab. Theory Stoch. Model., 83.} Springer, Cham, 2018. xxv+713 pp.

\bibitem{DPZ}
M. Duong, M. A. Peletier, J. Zimmer,
\newblock GENERIC formalism of a Vlasov-Fokker-Planck equation and connection to large-deviation principles.
\newblock {\em Nonlinearity} \textbf{26} (2013), 2951--2971.

\bibitem{FF}
A. Fathi, A. Figalli,
\newblock Optimal transportation on non-compact manifolds.
\newblock {\em Isr. J. Math.} \textbf{175} (2010), 1--59.

\bibitem{JKO2}
R. Jordan, D. Kinderlehrer, F. Otto,
\newblock The variational formulation of the Fokker-Planck equation.
\newblock {\em SIAM J. Math. Anal.} \textbf{29} (1998), 1--17.

\bibitem{K}
M. Kac,
\newblock Foundations of kinetic theory.
\newblock {\em Proceedings of the Third Berkeley Symposium on Mathematical Statistics and Probability, 1954--1955, vol. III, 171--197.} University of California Press, Berkeley-Los Angeles, Calif., 1956.

\bibitem{KC}
D. Kim, L. Chun Yeung,
\newblock A trajectorial approach to entropy dissipation for degenerate parabolic equations.
\newblock {\em Bernoulli} \textbf{30} (2024), 2253--2274.

\bibitem{KLMP}
R. Kraaij, A. Lazarescu, C. Maes, M. A. Peletier,
\newblock Deriving GENERIC from a generalized fluctuation symmetry.
\newblock {\em J. Stat. Phys.} \textbf{170} (2018), 492--508.

\bibitem{KRTY}
A. Kazeykina, Z. Ren, X. Tan, J. Yang,
\newblock Ergodicity of the underdamped mean-field Langevin dynamics.
\newblock {\em Ann. Appl. Probab.} \textbf{34} (2024), 3181--3226.

\bibitem{KST}
I. Karatzas, W. Schachermayer, B. Tschiderer,
\newblock A trajectorial approach to the gradient flow properties of Langevin-Smoluchowski diffusions.
\newblock {\em Theory Probab. Appl.} \textbf{66} (2022), 668--707.

\bibitem{LW}
Z. Liu, X. Wang,
\newblock A trajectorial approach to the gradient flow of McKean-Vlasov SDEs with mobility, arXiv: 2501.11913 [math.PR].

\bibitem{MR}
J. E. Marsden, T. S. Ratiu,
\newblock {\em Introduction to Mechanics and Symmetry.}
\newblock Second edition. Springer-Verlag, New York, 1999. xviii+535 pp.

\bibitem{MPZ}
A, Mielke, M. A. Peletier, J. Zimmer,
\newblock Deriving a GENERIC system from a Hamiltonian system.
\newblock {\em Arch. Ration. Mech. Anal.} \textbf{249} (2025), Paper No. 62, 71 pp.

\bibitem{OTTO}
F. Otto,
\newblock The geometry of dissipative evolution equations: the porous medium equation.
\newblock {\em Comm. Partial Differential Equations} \textbf{26} (2001), 101--174.

\bibitem{S2}
K. Sturm,
\newblock Convex functionals of probability measures and nonlinear diffusions on manifolds.
\newblock {\em J. Math. Pures Appl.} \textbf{84} (2005), 149--168.

\bibitem{TC}
B. Tschiderer, L. Chun Yeung,
\newblock A trajectorial approach to relative entropy dissipation of McKean-Vlasov diffusions: gradient flows and HWBI inequalities.
\newblock {\em Bernoulli} \textbf{29} (2023), 725--756.




\end{thebibliography}

\end{document}